\title{Topological Equivalences of E-infinity Differential Graded Algebras}
\author{Haldun Özgür Bayındır}
\NewDocumentCommand{\tens}{t_}
 {%
  \IfBooleanTF{#1}
   {\tensop}
   {\otimes}%
 }
\NewDocumentCommand{\tensop}{m}
 {%
  \mathbin{\mathop{\otimes}\displaylimits_{#1}}%
 }
\newcommand*{\rom}[1]{\expandafter\@slowromancap\romannumeral #1@}
\newcommand{\Z}{\mathbb{Z}}
\newcommand{\Q}{\mathbb{Q}}
\newcommand{\F}{\mathbb{F}}
\newcommand{\Pp}{\mathbb{P}}
\newcommand{\Sp}{\mathbb{S}}
\mathchardef\mhyphen="2D
\newtheorem{theorem}{Theorem}[section]
\newtheorem{lemma}[theorem]{Lemma}
\newtheorem{corollary}[theorem]{Corollary}
\newtheorem{proposition}[theorem]{Proposition}
\theoremstyle{definition}
\newtheorem{definition}[theorem]{Definition}
\newtheorem{example}[theorem]{Example}
\newtheorem{remark}[theorem]{Remark}
\def\co{\colon\thinspace}
\date{}
\begin{document}

\maketitle

\begin{abstract}
Two DGAs are said to be topologically equivalent when the corresponding Eilenberg--Mac Lane ring spectra are weakly equivalent as ring spectra. Quasi-isomorphic DGAs are  topologically equivalent but the converse is not necessarily true. As a counter-example, Dugger and Shipley showed that there are DGAs that are  non-trivially topologically equivalent, ie topologically equivalent but not quasi-isomorphic. 

In this work, we define $E_\infty$ topological equivalences and utilize the obstruction theories developed by Goerss, Hopkins and Miller to construct first examples of non-trivially $E_\infty$ topologically equivalent $E_\infty$ DGAs. Also, we show using these obstruction theories that for co-connective $E_\infty$ $\F_p$--DGAs, $E_\infty$ topological equivalences and quasi-isomorphisms agree. For $E_\infty$ $\F_p$--DGAs with trivial first homology, we show that an $E_\infty$ topological equivalence induces an isomorphism in homology that preserves the Dyer--Lashof operations and therefore induces an $H_\infty
$ $\F_p$--equivalence. 
\end{abstract}

\section{Introduction}
Dugger and Shipley defined a new equivalence relation between associative differential graded algebras (which we call DGAs) that they call topological equivalences  \cite{dugger2007topological}. To define topological equivalences, they use the Quillen equivalence between $R$--DGAs and $HR$--algebras where $R$ denotes a discrete commutative ring, see Shipley \cite{shipley2007hz}. Two $R$--DGAs $X$ and $Y$ are said to be \textbf{topologically equivalent} if the corresponding $HR$--algebras $HX$ and $HY$ are weakly equivalent as $\Sp$--algebras where $\Sp$ denotes the sphere spectrum. Using Quillen equivalences in \cite{shipley2007hz}, it is easy to see that topologically equivalent  DGAs are Morita equivalent. Furthermore, topological equivalences appear in one of the equivalent definitions of Morita equivalences of   DGAs, see Theorem 1.4 of \cite{dugger2007topological}.

By the Quillen equivalence between $R$--DGAs and $HR$--algebras, two $R$--DGAs are quasi isomorphic if and only if the corresponding $HR$--algebras are weakly equivalent as $HR$--algebras. Because the forgetful functor from   $HR$--algebras to    $\Sp$--algebras preserves weak equivalences, it is clear that quasi-isomorphic   DGAs are always topologically equivalent. One of the main results of  \cite{dugger2007topological} is that there are   DGAs that are not quasi-isomorphic but are    topologically equivalent. Such    DGAs are called non-trivially   topologically equivalent. On the other hand, another theorem in \cite{dugger2007topological} states that there are no examples of non-trivial   topological equivalences in $\mathbb{Q}$--DGAs, ie topologically equivalent $\Q$--DGAs are quasi-isomorphic. See Theorem \ref{theorem Q} below. 

Because there is also a Quillen equivalence between $E_\infty$ $R$--DGAs and commutative $HR$--algebras, see Richter and Shipley \cite{richter2014algebraic}, topological equivalences for $E_\infty$ DGAs can also be considered. Now we explain what we mean by topological equivalences for DGAs and $E_\infty$ DGAs. For DGAs we have the following definition of topological equivalence.  

\begin{definition}
Two   $R$--DGAs $X$ and $Y$ are   \textit{topologically equivalent} if the corresponding   $HR$--algebras $HX$ and $HY$ are weakly equivalent as $\Sp$--algebras. This is same as the definition of topological equivalence  in \cite{dugger2007topological}.
\end{definition}

\noindent
The definition for topological equivalence of $E_\infty$ DGAs is the following.

\begin{definition}
Two $E_\infty$ $R$--DGAs $X$ and $Y$ are \textit{$E_\infty$ topologically equivalent} if the corresponding commutative $HR$--algebras $HX$ and $HY$ are weakly equivalent as commutative $\Sp$--algebras. 
\end{definition}

Our methods make use of obstruction theories for ring spectra. In \cite{robinson1989obstruction}, Robinson develops an obstruction theory for showing existence of ring structures on spectra. This obstruction theory is generalized for commutative ring spectra by Robinson in  \cite{robinson2003gamma}. Based on the obstruction spectral sequence of Bousfield \cite{bousfield1989homotopy}, Hopkins and Miller developed another obstruction theory \cite{rezk1998notes}. Their obstruction theory provides an obstruction spectral sequence for calculating mapping spaces of   ring spectra and also an obstruction theory for showing the existence of ring  structures on spectra. This obstruction theory is generalized to commutative ring spectra by Goerss and Hopkins \cite{goerss2004moduli}. In this work, we use the $T$--algebra spectral sequence of Johnson and Noel \cite{johnson2014lifting}, which is a generalization of the obstruction spectral sequence of Hopkins and Miller, to calculate mapping spaces of commutative ring spectra. 

In this work, we construct the first examples of non-trivially $E_\infty$ topologically equivalent $E_\infty$ DGAs. One of these examples is in $E_\infty$ $\F_p$--DGAs. This is particularly interesting because one of the open questions in \cite{dugger2007topological} asks if there are any examples of non-trivial   topological equivalences of   $k$--DGAs for a field $k$. Our example provides a positive answer to this question in $E_\infty$ DGAs. Although there is an example of non-trivial $E_\infty$ topological equivalences over $\F_p$, our non-existence results for $E_\infty$ topological equivalences hint that such examples are not common. 

Before stating our non-existence results, we note that    topologically equivalent   DGAs have isomorphic homology rings. This is because the Quillen equivalence between   $R$--DGAs and   $HR$--algebras gives an isomorphism between the homology ring of an   $R$--DGA and the homotopy ring of the corresponding ring spectra. Therefore if $X$ and $Y$ are   topologically equivalent   DGAs, then $H_*(X) \cong \pi_*(HX) \cong \pi_*(HY) \cong H_*(Y)$ where the isomorphisms are ring isomorphisms and the isomorphism in the middle is induced by the   topological equivalence. The same is true for $E_\infty$ topological equivalences, but as Example \ref{example Fp} indicates, the isomorphism of homology rings may not preserve Dyer--Lashof operations. However, by Theorem \ref{nonexistence 3}, if $X$ and $Y$ are $E_\infty$ topologically equivalent $E_\infty$ $\F_p$--DGAs with trivial first homology, then the isomorphism of homology rings induced by the $E_\infty$ topological equivalence preserves Dyer--Lashof operations, ie it is an isomorphism of algebras over the Dyer--Lashof algebra.

For co-connective $E_\infty$ $\Z/(m)$--DGAs, we prove that there are no non-trivial $E_\infty$ topological equivalences where $m$ is a non-unital integer (ie $m \in \Z$ with $m\neq \pm 1$). This in particular implies that there are no non-trivial $E_\infty$ topological equivalences of co-connective $E_\infty$ $\Z$--DGAs.

\begin{theorem}
\label{nonexistence 2}
$E_\infty$ Topologically equivalent co-connective $E_{\infty}$  $\Z/(m)$--DGAs are quasi-isomorphic as $E_{\infty}$  $\Z/(m)$--DGAs. In other words, $E_\infty$ topological equivalences and quasi-isomorphisms agree for co-connective $E_\infty$ $\Z/(m)$--DGAs. 
\end{theorem}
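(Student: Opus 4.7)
The plan is to use the Richter--Shipley Quillen equivalence to reduce the statement to a comparison of commutative $H\Z/(m)$-algebra structures on a fixed commutative $\Sp$-algebra, and then apply the Johnson--Noel $T$-algebra spectral sequence, using co-connectivity to force the obstructions to vanish.

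First, by Richter--Shipley, two $E_\infty$ $\Z/(m)$-DGAs are quasi-isomorphic if and only if the corresponding commutative $H\Z/(m)$-algebras are weakly equivalent. Given an $E_\infty$ topological equivalence $f \co HX \to HY$ in commutative $\Sp$-algebras, the composite $f \circ u_X \co H\Z/(m) \to HY$ with the unit of $HX$ is a commutative $\Sp$-algebra map. I would reduce the theorem to showing that $f \circ u_X$ lies in the same path-component as the original unit $u_Y$ inside $\map_{\mathrm{CAlg}(\Sp)}(H\Z/(m), HY)$. The fiber sequence
\[
\map_{\mathrm{CAlg}(H\Z/(m))}(HX,HY) \to \map_{\mathrm{CAlg}(\Sp)}(HX,HY) \to \map_{\mathrm{CAlg}(\Sp)}(H\Z/(m),HY),
\]
together with the homotopy lifting property of the forgetful functor, then lets one replace $f$ by a homotopic commutative $\Sp$-algebra map $f'$ strictly commuting with units, producing the desired equivalence of commutative $H\Z/(m)$-algebras.

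Second, I would apply the Johnson--Noel $T$-algebra spectral sequence to $\map_{\mathrm{CAlg}(\Sp)}(H\Z/(m), HY)$. Its $E_2$-page is built from derived functors of commutative $\Sp$-algebra derivations of $H\Z/(m)$ into Eilenberg--Mac Lane coefficients supplied by $\pi_*(HY)$, and terms of bidegree $(s,t)$ with $t \geq s$ contribute to $\pi_{t-s}$ of the mapping space. Because $HY$ is co-connective, $\pi_t(HY) = 0$ for every $t > 0$, so every $E_2^{s,t}$ with $t \geq 1$ is zero. On the diagonal $t=s$ that controls $\pi_0$ this leaves only $E_2^{0,0}$, which the usual Hurewicz-type edge identification pins down as the set of commutative ring maps $\pi_0(H\Z/(m)) = \Z/(m) \to \pi_0(HY) = \Z/(m)$. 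Since this set has a unique element (the identity), $\pi_0 \map_{\mathrm{CAlg}(\Sp)}(H\Z/(m), HY)$ is a singleton, forcing $f \circ u_X \simeq u_Y$ as required.

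The main obstacle will be pinning down the precise $E_2$-page of the Johnson--Noel spectral sequence in this setting and justifying the edge identification. Concretely, the obstruction groups live in topological André--Quillen cohomology of $H\Z/(m)$ over $\Sp$, and because $\Sp \to H\Z \to H\Z/(m)$ is very far from smooth, the corresponding cotangent complex is nontrivial. One needs to confirm that, despite this, only the $\pi_t HY$ coefficients enter the $E_2$-page, so that co-connectivity alone kills every term with $t > 0$, and that the edge map at $E_2^{0,0}$ really reduces to ordinary commutative ring maps on $\pi_0$. The convergence and fringed-range issues typical of obstruction spectral sequences should be manageable in the co-connective setting, but will require care.
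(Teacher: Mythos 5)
Your overall architecture coincides with the paper's: both arguments reduce the theorem to showing that $\pi_0\map_{\Sp\mhyphen cAlg}(H\Z/(m),HY)$ is a point, so that the two unit maps $u_Y$ and $f\circ u_X$ are homotopic and the $\Sp$--algebra equivalence can be rectified to an equivalence of commutative $H\Z/(m)$--algebras (the paper does the rectification with a path object rather than your fibration sequence, which is a cosmetic difference). The gap is in how you propose to compute that mapping space. You invoke an $E_2$--page of the Johnson--Noel spectral sequence given by derived derivations with coefficients controlled by $\pi_*(HY)$, and you kill all terms with $t\geq 1$ by citing $\pi_t(HY)=0$. There are two problems. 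First, no such algebraic $E_2$--page is available here: for composite $m$ there is no flatness, no K\"unneth isomorphism, and no Dyer--Lashof-type description of the homotopy of $\Pp_{\Sp}(-)$, so the identification of $E_2$ with Andr\'e--Quillen cohomology in a suitable algebraic category --- which is exactly what makes Theorem \ref{obstruction} usable over $\F_p$ --- does not exist in this generality; the paper flags precisely this difficulty before Proposition \ref{calculation 1}. Second, even where such a description exists, the coefficients in bidegree $(s,t)$ are $\pi_*(HY^{S^t})$, not $\pi_t(HY)$; these groups are nonzero for co-connective $HY$, and the vanishing you need comes from the connectivity of the source played against the co-connectivity of the target, not from co-connectivity of $HY$ alone.

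The paper's resolution, which your proposal is missing, is to abandon the $E_2$--page and argue at the level of the $E_1$--page of the $T$--algebra spectral sequence: by adjunction, $E_1^{s,t}=\pi_t\map_{\Sp\mhyphen cAlg}(\Pp_{\Sp}^{s+1}(H\Z/(m)),HY)\cong\pi_t\map_{\Sp\mhyphen mod}(\Pp_{\Sp}^{s}(H\Z/(m)),HY)$. The spectrum $\Pp_{\Sp}^{s}(H\Z/(m))$ is connective because homotopy orbits of connective spectra are connective, and $HY$ is co-connective, so $E_1^{s,t}=0$ for all $t>0$ by Proposition 1.4 in Chapter IV of \cite{elmendorf2007rings}. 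For the remaining row, a universal coefficient spectral sequence over $H\Z$ (using that $\Z$ has global dimension one, that $H\Z_*H\Z/(m)$ is connective, and that $\pi_*HY$ is co-connective) identifies $E_1^{0,0}$ with $\Hom_{\Z}(\Z/(m),\pi_0 HY)$, and surviving to $E_2^{0,0}$ forces unit-preservation, leaving the single element realized by the base point. Without this $E_1$--level argument, or some substitute for it, your proof does not close.
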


There is an important class of examples for this theorem, namely the  cochain complex of a topological space with coefficients in $\Z/(m)$; this is the function spectrum $F(\Sigma^{\infty}X_+, H\Z/(m))$ for a topological space $X$. Note that since we use homological grading, the cochain complex of a space is co-connective. Moreover, Mandell's result states that finite type nilpotent spaces are weakly equivalent if and only if their cochain complexes with integer coefficients are quasi-isomorphic as $E_\infty$ $\Z$--DGAs \cite{mandell2006cochains}. Combining Mandell's result with Theorem \ref{nonexistence 2}, we obtain the following corollary.

\begin{corollary}
Finite type nilpotent spaces are weakly equivalent if and only if their cochain complexes with integer coefficients are $E_\infty$ topologically equivalent. 
\end{corollary}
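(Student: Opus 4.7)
The plan is to deduce the corollary by chaining two results: Mandell's theorem on cochain complexes of finite type nilpotent spaces and Theorem~\ref{nonexistence 2} applied to $\Z$-DGAs (the $m=0$ case highlighted immediately after the theorem statement). The chain of implications will be: weak equivalence of spaces $\iff$ quasi-isomorphism of integral cochain $E_\infty$ DGAs $\iff$ $E_\infty$ topological equivalence of those cochain DGAs.

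First, I would record the two preliminary observations that make the chain go through. The cochain spectrum $F(\Sigma^\infty X_+, H\Z)$ is naturally an $E_\infty$ $\Z$-DGA, and under the homological grading convention employed in the paper its homology is concentrated in non-positive degrees, so it is co-connective in the sense required by Theorem~\ref{nonexistence 2}. Also, a quasi-isomorphism of $E_\infty$ $\Z$-DGAs automatically yields an $E_\infty$ topological equivalence: under the Richter--Shipley Quillen equivalence between $E_\infty$ $\Z$-DGAs and commutative $H\Z$-algebras it corresponds to a weak equivalence of commutative $H\Z$-algebras, which then forgets along $H\Z \to \Sp$ to a weak equivalence of commutative $\Sp$-algebras.

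With these in hand, the forward direction of the corollary is immediate from Mandell's theorem together with the observation above: if $X$ and $Y$ are weakly equivalent finite type nilpotent spaces, then their cochain $E_\infty$ $\Z$-DGAs are quasi-isomorphic and hence $E_\infty$ topologically equivalent. For the reverse direction, suppose the cochain complexes of $X$ and $Y$ are $E_\infty$ topologically equivalent. Since they are co-connective $E_\infty$ $\Z$-DGAs, Theorem~\ref{nonexistence 2} (in the $\Z$-DGA case) promotes this $E_\infty$ topological equivalence to a quasi-isomorphism of $E_\infty$ $\Z$-DGAs. Mandell's theorem then implies that $X$ and $Y$ are weakly equivalent.

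There is no real obstacle in the argument; the corollary is essentially a formal concatenation of Theorem~\ref{nonexistence 2} with Mandell's theorem. The only point requiring care is the compatibility of conventions, specifically confirming that the cochain model used in Mandell's theorem coincides, up to $E_\infty$ quasi-isomorphism, with the co-connective $E_\infty$ $\Z$-DGA model $F(\Sigma^\infty X_+, H\Z)$ to which Theorem~\ref{nonexistence 2} applies.
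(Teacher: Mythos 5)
Your argument is correct and is exactly the paper's intended proof: the corollary is obtained by combining Mandell's theorem with Theorem~\ref{nonexistence 2} applied to co-connective $E_\infty$ $\Z$--DGAs (the cochain spectrum $F(\Sigma^\infty X_+, H\Z)$ being co-connective in the paper's homological grading), together with the easy observation that quasi-isomorphism implies $E_\infty$ topological equivalence. No substantive difference from the paper's route.
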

\begin{remark}
It is also interesting to consider the following consequence of Theorem \ref{nonexistence 2}. For a co-connective commutative $\Sp$--algebra $X$, we use the $E_\infty$ connective cover $H\pi_0 X \to X$ to obtain a map $H\Z \to X$ which gives $X$ a commutative $H\Z$--algebra structure. This says that there is an $E_\infty$ $\Z$--DGA corresponding to a co-connective commutative $\Sp$--algebra. By this and Theorem \ref{nonexistence 2}, we deduce that weak equivalence classes of co-connective commutative $\Sp$--algebras are uniquely determined by the quasi-isomorphism classes of the corresponding $E_\infty$ $\Z$--DGAs.
\end{remark}

In Example \ref{example Fp}, we construct $E_\infty$ $\F_p$--DGAs that are non-trivially $E_\infty$ topologically equivalent. Therefore it is not possible to generalize Theorem \ref{nonexistence 2} to all $E_\infty$ $\F_p$--DGAs. However, for $E_\infty$ $\F_p$--DGAs with trivial first homology we have the following result.

\begin{theorem}
\label{nonexistence 3}
Let $X$ and $Y$ be $E_{\infty}$ $\F_p$--DGAs with trivial first homology group. If $X$ and $Y$ are $E_\infty$ topologically equivalent, then they are $H_{\infty}$ $\F_p$--algebra equivalent. Furthermore, an $\Sp$--algebra equivalence between $HX$ and $HY$ induces an isomorphism of the homology rings that preserves Dyer--Lashof operations.
\end{theorem}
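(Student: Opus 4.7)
The plan is to take a given $\Sp$-algebra equivalence $f\colon HX \to HY$, which exists by the assumed $E_\infty$ topological equivalence (forget the commutative structure), and lift it, on connected components of the mapping space, to an $H_\infty$ $H\F_p$-algebra map. Since an $H_\infty$ $H\F_p$-algebra map between commutative $H\F_p$-algebras automatically induces a map of homotopy groups compatible with Dyer--Lashof operations (by naturality of the extended $H\F_p$-power construction), such a single lifting delivers both parts of the theorem at once: part (a), the $H_\infty$ $\F_p$-equivalence itself; and part (b), because the lift induces the same ring map on $\pi_*$ as the original $f$, up to $\Sp$-algebra homotopy.

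To carry out the lifting I would use the Johnson--Noel $T$-algebra spectral sequence for computing $\pi_0 \operatorname{Map}_{H_\infty H\F_p}(HX, HY)$, which was flagged as our main obstruction-theoretic tool in the introduction. Its $E_2$-page is André--Quillen cohomology in the category of graded-commutative $\F_p$-algebras equipped with Dyer--Lashof operations, evaluated on the pair $(H_*X, H_*Y)$. Comparing this with the corresponding obstruction spectral sequence for $\Sp$-algebra maps (whose $E_2$-page is AQ cohomology in a purely associative setting, with no DL structure), the discrepancy is controlled by a relative AQ theory: the obstructions to lifting a given $\Sp$-algebra map to an $H_\infty$ $H\F_p$-map, and to the uniqueness of such a lift, live in positive-filtration AQ groups that measure the DL-indecomposables of $H_*X$ with coefficients in $H_*Y$.

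The decisive input is $H_1(X)=0$, which gives $H_1(Y)=0$ as well since $f$ induces an isomorphism on $\pi_*$. This translates into vanishing of the degree-$1$ piece of the DL-indecomposables of $H_*X$, and by a bigrading argument this forces the relevant obstruction AQ-groups to be zero. The main obstacle I expect is precisely this step: one must identify exactly which cohomological bidegrees are responsible for the obstructions to $H_\infty$-refinement of an $\Sp$-algebra map, and show that the $H_1=0$ hypothesis really annihilates these groups rather than merely relocating the obstructions, including handling any contribution from higher Dyer--Lashof operations landing in degree $1$ from higher-degree generators. Isolating the correct ``edge'' of the spectral sequence and checking convergence in this range is where the work lies.

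Once the obstructions vanish, the $\Sp$-algebra equivalence $f$ lifts to an $H_\infty$ $H\F_p$-algebra map $\tilde f \colon HX \to HY$, which is automatically an equivalence since it lifts one. This proves the first assertion. For the second assertion, $\tilde f$ and $f$ agree in $\pi_0\operatorname{Map}_{\Sp\text{-alg}}(HX, HY)$, so they induce the same map of homology rings, and $\tilde f_*$ preserves Dyer--Lashof operations by the naturality of the $H_\infty$ structure. Hence $f_*$ preserves Dyer--Lashof operations as claimed.
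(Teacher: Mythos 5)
Your proposal has a genuine gap: it locates the difficulty in the wrong place. You propose to lift the $\Sp$--algebra equivalence $f$ to a structured map by showing that higher (positive-filtration) Andr\'e--Quillen obstruction groups vanish, with $H_1(X)=0$ killing ``the degree-$1$ piece of the DL-indecomposables of $H_*X$.'' But the obstruction to the theorem is not in higher filtration at all --- it sits at the very bottom, at the level of $\pi_0$/$E_2^{0,0}$: the question is whether the ring isomorphism $f_*$ itself preserves Dyer--Lashof operations. Example \ref{example Fp} of the paper shows that when $H_1\neq 0$ this can fail outright for a ring isomorphism induced by an $\Sp$--algebra equivalence, so no vanishing of higher AQ groups can rescue the statement; the hypothesis must be used to control the $E_2^{0,0}$ term, not the differentials or higher obstructions. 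Moreover, the relevant algebraic object for comparing $\Sp$--algebra maps with $H\F_p$--algebra maps is ${H\F_p}_*HX\cong\mathcal{A}_*\otimes_{\F_p}H_*X$, whose degree-$1$ part contains $\tau_0\otimes 1$ regardless of whether $H_1(X)=0$; so the hypothesis does not annihilate the groups you point to. Finally, the Johnson--Noel machinery requires genuine $E_\infty$ input, whereas the statement you are proving (and the stronger Theorem \ref{thm DL}) concerns $H_\infty$ structures, for which there is no such mapping-space spectral sequence.

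The paper's actual proof (Theorem \ref{thm DL}) is elementary and avoids obstruction theory entirely. Writing $\eta_X\co X\to H\F_p\wedge X$ and $\mu_X\co H\F_p\wedge X\to X$ for the unit and action maps, with $\mu_X\circ\eta_X=\mathrm{id}$, one identifies $\pi_*(H\F_p\wedge X)\cong\mathcal{A}_*\otimes_{\F_p}X_*$ and observes that $\psi=(\mathrm{id}\wedge\varphi)_*$ and $\mu_{Y*}$ both preserve Dyer--Lashof operations. The hypothesis enters as follows: $\mu_{Y*}\circ\psi(\tau_0\otimes 1)$ lies in $\pi_1Y=0$, and since $\mathcal{A}_*$ is generated by $\tau_0$ as an algebra over the Dyer--Lashof algebra, $\mu_{Y*}\circ\psi(a\otimes 1)=0$ for all $|a|>0$. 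This forces the square relating $\mu_{X*},\psi,\mu_{Y*},\varphi_*$ to commute, and then
\[
\varphi_*(\mathrm{Q}^sx)=\varphi_*\mu_{X*}(\mathrm{Q}^s(1\otimes x))=\mu_{Y*}\psi(\mathrm{Q}^s(1\otimes x))=\mathrm{Q}^s\mu_{Y*}\psi(1\otimes x)=\mathrm{Q}^s\varphi_*(x),
\]
after which Lawson's result that the $H_\infty$ $H\F_p$--equivalence type is determined by the homotopy ring as an algebra over the Dyer--Lashof algebra finishes the proof. If you want to keep an obstruction-theoretic flavor, you would still need this computation to produce the DL-preserving isomorphism that populates $E_2^{0,0}$ in the first place.
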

We actually prove a stronger result. Theorem \ref{thm DL} states that for $H_\infty$ $H\F_p$--algebras with trivial first homotopy, $H_\infty$ $\Sp$--algebra equivalence implies $H_\infty$ $H\F_p$--algebra equivalence. 

The condition of trivial first homology is due to the fact that the dual Steenrod algebra is generated by an element of degree one as a ring with Dyer--Lashof operations. Again by Example \ref{example Fp}, this condition cannot be removed from this theorem. 

\begin{remark} \label{remark Tyler}
In \cite{lawson2015note}, Lawson produces examples of $H_\infty$ $\Sp$--algebras whose $H_\infty$ $\Sp$--algebra structures do not lift to commutative $\Sp$--algebra structures. One of the intermediate results of \cite{lawson2015note} states that Theorem \ref{thm DL} is still true without the restriction on the first homotopy but Example \ref{example Fp} contradicts this. The examples of spectra constructed in \cite{lawson2015note} are co-connective. Therefore, Theorem \ref{thm DL} recovers the main result of \cite{lawson2015note}. We elaborate on this in Section \ref{section dl}.
\end{remark}

The proof of the non-existence theorem in \cite{dugger2007topological} for  $\mathbb{Q}$--DGAs also works for $E_\infty$ $\mathbb{Q}$--DGAs. We obtain the following. 

\begin{theorem} \sloppy \label{theorem Q}
($E_\infty$) topologically equivalent ($E_\infty$) $\mathbb{Q}$--DGAs are quasi-isomorphic. That is, ($E_\infty$) topological equivalences and ($E_\infty$) 
quasi-isomorphisms agree in ($E_\infty$) $\mathbb{Q}$--DGAs.
\end{theorem}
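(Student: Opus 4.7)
The plan is to reduce the statement to the Quillen equivalence between ($E_\infty$) $\mathbb{Q}$--DGAs and ($E_\infty$) $H\mathbb{Q}$--algebras of Shipley and Richter--Shipley, by showing that between rational ring spectra, an $\mathbb{S}$--algebra equivalence automatically upgrades to an $H\mathbb{Q}$--algebra equivalence. The underlying spectrum of $HX$ for a $\mathbb{Q}$--DGA $X$ is rational (its homotopy groups are $\mathbb{Q}$--vector spaces), so $HX$ and $HY$ both live in the subcategory of rational ring spectra, and the point is that this subcategory does not distinguish $\mathbb{S}$--linearity from $H\mathbb{Q}$--linearity.

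The key ingredient is that rationalization is a smashing Bousfield localization on the level of spectra and that $H\mathbb{Q} \wedge_{\mathbb{S}} H\mathbb{Q} \simeq H\mathbb{Q}$, which is verified directly on homotopy groups via $\mathbb{Q}\otimes_{\mathbb{Z}} \mathbb{Q} \cong \mathbb{Q}$. First I would record that for any ($E_\infty$) $\mathbb{S}$--algebra $A$ whose underlying spectrum is rational, the unit map $A \to A \wedge_{\mathbb{S}} H\mathbb{Q}$ is an equivalence of ($E_\infty$) $\mathbb{S}$--algebras. The target carries a canonical ($E_\infty$) $H\mathbb{Q}$--algebra structure coming from the second smash factor, so this exhibits $A$ as an ($E_\infty$) $H\mathbb{Q}$--algebra. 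A formal consequence is that the forgetful functor from ($E_\infty$) $H\mathbb{Q}$--algebras to ($E_\infty$) $\mathbb{S}$--algebras is fully faithful with essential image the rational ones; equivalently, the space of ($E_\infty$) $\mathbb{S}$--algebra maps between rational ($E_\infty$) $\mathbb{S}$--algebras agrees with the space of ($E_\infty$) $H\mathbb{Q}$--algebra maps.

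Applying this to $HX$ and $HY$, an ($E_\infty$) topological equivalence $HX \simeq HY$ is promoted (uniquely up to homotopy) to an equivalence of ($E_\infty$) $H\mathbb{Q}$--algebras. Invoking the Quillen equivalence between ($E_\infty$) $\mathbb{Q}$--DGAs and ($E_\infty$) $H\mathbb{Q}$--algebras, this translates into an ($E_\infty$) quasi-isomorphism $X \simeq Y$, concluding the proof. For the associative case one uses Shipley's equivalence, and for the $E_\infty$ case the Richter--Shipley version.

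The only non-trivial step is justifying that smashing with $H\mathbb{Q}$ is truly a smashing localization in the multiplicative sense, i.e.\ that it restricts correctly to the categories of $\mathbb{S}$--algebras and commutative $\mathbb{S}$--algebras and exhibits them as ($E_\infty$) $H\mathbb{Q}$--algebras. In an EKMM-type framework this is standard once $H\mathbb{Q} \wedge_{\mathbb{S}} H\mathbb{Q} \simeq H\mathbb{Q}$ is established on the commutative ring spectrum level; this is precisely the content underlying Dugger and Shipley's argument in the associative case, and the $E_\infty$ variant introduces no new difficulty because rationalization is already symmetric monoidal on spectra.
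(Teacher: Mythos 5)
Your proposal is correct and is essentially the argument the paper intends: the paper simply notes that the Dugger--Shipley proof for $\mathbb{Q}$--DGAs carries over to the $E_\infty$ setting, and that proof is precisely your rationalization argument --- using $H\mathbb{Q}\wedge H\mathbb{Q}\simeq H\mathbb{Q}$ to show the forgetful functor from (commutative) $H\mathbb{Q}$--algebras to rational (commutative) $\mathbb{S}$--algebras loses no information, then transporting the equivalence through the Shipley and Richter--Shipley Quillen equivalences. No substantive difference from the paper's route.
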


In the next section, we explain the examples of non-trivial   topological equivalences given in \cite{dugger2007topological} and in the appendix, we make a correction to a mistake in the construction of these examples. Section \ref{Section obstruction} discusses the obstruction spectral sequences that we will use for calculating mapping spaces of ring spectra and Section \ref{Dyer Lashof} describes the dual Steenrod algebra and the Dyer--Lashof operations on it. Section \ref{section examples} is devoted to our examples of non-trivial $E_\infty$ topological equivalences. Section \ref{section nonexist} contains the proof of Theorem \ref{nonexistence 2} and Section \ref{section dl} contains the proof of Theorem \ref{nonexistence 3}.

\textbf{Notation} As noted earlier, for a commutative ring $R$, when we say $R$--DGAs we mean associative $R$--DGAs. Similarly for a commutative ring spectrum $R$, $R$--algebras denote associative $R$--algebras. A smash product without a subscript $\wedge$ denotes the smash product over the sphere spectrum. The category of spectra we use is symmetric spectra in topological spaces with the positive model structure as in Mandell, May, Schwede and Shipley \cite{Mandell01Model}. 

\textbf{Acknowledgements} The author would like to thank his thesis advisor Brooke Shipley for her guidance, advice and financial support over the years. I also would like to thank Michael Hopkins for suggesting Goerss Hopkins Miller obstruction theory for the purpose of studying topological equivalences and Paul Goerss for showing me the version of this obstruction theory that I use in this paper. Finally I would like to thank Benjamin Antieau for his helpful advice over the years I worked on this project.

\section{Previous examples of topological equivalences} \label{sec previous examples}

In this section we discuss the examples of non-trivial topological equivalences in \cite{dugger2007topological}. 

\begin{example} \label{ex ds}
There are exactly two non quasi-isomorphic DGAs whose homology is $\Lambda_{\F_p}(x_{2p-2})$, the exterior algebra over $\F_p$ with a single generator in degree $2p-2$, and these DGAs are topologically equivalent. Therefore they are non-trivially topologically equivalent. For $p=2$, one of these DGAs is the formal one and the other one is given by
\[ \Z[e_1;de_1 = 2]/(e_1^4) \ \textnormal{where} \ \lvert e_1  \rvert = 1. \]
\end{example}
This example is constructed by classifying weak equivalence classes of Postnikov extensions which are obtained using topological Hochschild cohomology. However, this construction in \cite{dugger2007topological} contains a gap. The weak equivalences classes calculated in \cite{dugger2007topological} are weak equivalence classes of Postnikov extensions. This in general may not correspond to weak equivalence classes that should be considered here, namely the weak equivalence classes of  $H\Z$--algebras. In the appendix, we explain this in detail and correct the mistake in \cite{dugger2007topological} by showing that these two equivalence classes agree for this particular example. 

\begin{example} \label{previous ex 2}
The second example of Dugger and Shipley has a simpler construction. They start with $H\Z \wedge H\F_2$ and give this $\Sp$--algebra two $H\Z$--algebra structures using the maps $H\Z \cong H\Z \wedge \Sp \to H\Z \wedge H\F_2$ and $H\Z \cong \Sp \wedge H\Z  \to H\Z \wedge H\F_2$. These two $H\Z$--algebras are not weakly equivalent but their underlying $\Sp$--algebras are the same. This means that we have two DGAs that are not quasi-isomorphic but are topologically equivalent. In Theorem \ref{examples 3}, we provide a generalization of this example in $E_\infty$ DGAs. 
\end{example}

\section{Obstruction theories for ring spectra} \label{Section obstruction}

For a commutative $\Sp$--algebra $X$, a commutative $H\Z$--algebra structure on $X$ is given by a map $H\Z \to X$ of commutative $\Sp$--algebras. In other words, the category of commutative $H\Z$--algebras is the category commutative $\Sp$--algebras under $H\Z$. Therefore it is natural to consider the maps from $H\Z$ to a commutative $\Sp$--algebra $X$ for the purpose of studying $E_\infty$ topological equivalences. For this, we employ an obstruction spectral sequence to calculate homotopy class of maps in commutative ring spectra.

The obstruction spectral sequence we use relies on Bousfield's obstruction spectral sequence \cite{bousfield1989homotopy}. The first application of Bousfield's obstruction theory to ring spectra was in the Hopkins--Miller theorem, see \cite{rezk1998notes}. The obstruction theory of Hopkins and Miller is for associative ring spectra. It is used for showing existence of ring structures on spectra and for calculating mapping spaces of ring spectra. Hopkins and Miller use this obstruction theory to show that the Morava stabilizer group acts on the Morava $E$--theory spectrum $E_n$. Later, Goerss and Hopkins generalized this theory to commutative ring spectra \cite{goerss2004moduli}.

Johnson and Noel generalized the obstruction theory of the Hopkins Miller theorem to calculate mapping spaces of algebras over a general monad in a model category in \cite{johnson2014lifting}. This is called the $T$--algebra spectral sequence. 

Generalizing the obstruction theory of the Hopkins--Miller theorem to commutative ring spectra is not trivial because of the following problem. For a spectrum $X$ and a homology theory $E_*$ corresponding to another spectrum $E$, if $E_*X$ is flat over $E_*$, then $E_* T(X)$ is the free associative $E_*$--algebra over $E_*X$ where $T(X)$ is the free associative ring spectrum over $X$. For commutative ring spectra, one uses the free commutative ring spectra functor $\Pp_{\Sp}$ but $E_*\Pp_{\Sp}(X)$ may not have a nice description, even under the above flatness assumption. However, for calculating mapping spaces of commutative $H\F_p$--algebras one uses the fact that $\Pp_{H\F_p}(X)_*$ is the free unstable algebra over the Dyer--Lashof algebra generated by $X_*$. Noel uses this with the results of \cite{johnson2014lifting} and constructs a spectral sequence that calculates mapping spaces of commutative $H\F_p$--algebras, see Proposition 2.2 of \cite{noel2015t}. More generally, his spectral sequence calculates mapping spaces of commutative $Hk$--algebras for any field $k$.  Using the adjunction between commutative $\Sp$--algebras and commutative $H\F_p$--algebras we obtain the following spectral sequence from Noel's spectral sequence. 

\begin{theorem} \label{obstruction} 
\cite[Proposition 2.2]{noel2015t}
     Let $X$ be a commutative $\Sp$--algebra and let $Y$ be a commutative $H\F_p$--algebra. Given a map $\phi \co X \to Y$ of commutative $\Sp$--algebras, there is a spectral sequence abutting to $\pi_{t-s} \mathrm{map}_{\Sp \mhyphen  cAlg}(X,Y)$ where $\Sp \mhyphen  cAlg$ denotes commutative $\Sp$--algebras. The $E_2$ term of this spectral sequence is given by
    \[E_2^{0,0} = \mathrm{Hom}_{\mathcal{R} \mhyphen alg} ({H\F_p}_*X,Y_*) \] and for $t>0$,
    \[E_2^{s,t} = \mathrm{Der}^s_{\mathcal{R} \mhyphen alg} ({H\F_p}_*X,Y^{S^t}_*) \] 
    where $\mathrm{Der}^s_{\mathcal{R}\mhyphen alg}(-,-)$ denotes the $s$th Andr{\'e}--Quillen cohomology for unstable algebras with Dyer--Lashof operations \cite{quillen1970co}, $Y^{S^t}$ denotes the mapping spectrum from the $t$--sphere to $Y$ and $\mathrm{Hom}_{\mathcal{R}\mhyphen alg} ({H\F_p}_*X,Y_*)$ denotes morphisms preserving Dyer--Lashof operations.
    \\
    \\
    Obstructions to lifting a morphism in $E_2^{0,0}$ to a morphism of commutative $\Sp$--algebras lie in $\mathrm{Der}^t_{\mathcal{R}\mhyphen alg} ({H\F_p}_*X,Y^{S^{t-1}}_*)$ for $t \geq 2$. 
    \\
    \\
    Obstructions to up-to homotopy uniqueness of a lift lie in \\ 
    $\mathrm{Der}^t_{\mathcal{R}\mhyphen alg} ({H\F_p}_*X,Y^{S^t}_*)$ for $t \geq 1$.

\end{theorem}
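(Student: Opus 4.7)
The plan is to deduce this statement from Noel's spectral sequence (Proposition 2.2 of \cite{noel2015t}), which already provides such a spectral sequence for computing mapping spaces in commutative $H\F_p$--algebras. The key tool is the Quillen adjunction
\[ H\F_p \wedge (-) : \Sp\mhyphen cAlg \rightleftarrows H\F_p\mhyphen cAlg : U \]
between commutative $\Sp$--algebras and commutative $H\F_p$--algebras, with $U$ the forgetful functor.

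First I would observe that, for cofibrant $X$ in $\Sp\mhyphen cAlg$ and fibrant $Y$ in $H\F_p\mhyphen cAlg$, this adjunction gives a natural weak equivalence of derived mapping spaces
\[ \map_{\Sp\mhyphen cAlg}(X, Y) \simeq \map_{H\F_p\mhyphen cAlg}(H\F_p \wedge X, Y), \]
reducing the construction to Noel's spectral sequence applied with source $H\F_p \wedge X$. Next I would identify the terms: since $H\F_p \wedge X$ is the base-change of the commutative $\Sp$--algebra $X$, its homotopy groups are $H\F_p{}_* X$, carrying the unstable Dyer--Lashof algebra structure induced from the $E_\infty$ structure on $X$. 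Substituting $H\F_p{}_* X$ into Noel's description of the $E_2$ page yields exactly $E_2^{0,0} = \Hom_{\mathcal{R}\mhyphen alg}(H\F_p{}_* X, Y_*)$ and $E_2^{s,t} = \mathrm{Der}^s_{\mathcal{R}\mhyphen alg}(H\F_p{}_* X, Y^{S^t}_*)$ for $t>0$, matching the statement.

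The obstruction clauses then transport through the same adjunction: lifting a map of unstable Dyer--Lashof algebras $H\F_p{}_* X \to Y_*$ through the Postnikov-type tower used in Noel's construction corresponds bijectively, under the adjunction above, to lifting the associated map of commutative $\Sp$--algebras from $X$ to $Y$. Hence the groups $\mathrm{Der}^t_{\mathcal{R}\mhyphen alg}(H\F_p{}_* X, Y^{S^{t-1}}_*)$ and $\mathrm{Der}^t_{\mathcal{R}\mhyphen alg}(H\F_p{}_* X, Y^{S^t}_*)$ receive, respectively, the obstructions to existence and to homotopy uniqueness of a lift.

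The main technical point requiring care is checking that the Quillen adjunction really does induce the claimed equivalence of derived mapping spaces in the positive model structure from \cite{Mandell01Model}, and that the $H\F_p$--homology of a cofibrant commutative $\Sp$--algebra genuinely agrees, as an unstable algebra over the Dyer--Lashof algebra, with the object Noel feeds into his spectral sequence. Both are standard in this setting, but they are the ingredients that make the otherwise formal transport of Noel's result legitimate.
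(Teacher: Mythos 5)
Your proposal is correct and follows essentially the same route as the paper: both use the adjunction $\map_{\Sp\mhyphen cAlg}(X,Y) \simeq \map_{H\F_p\mhyphen cAlg}(H\F_p \wedge X, Y)$ to reduce to Noel's spectral sequence, identify $\pi_*(H\F_p\wedge X)$ with ${H\F_p}_*X$ as an unstable algebra over the Dyer--Lashof algebra, and then import the obstruction-theoretic clauses from the $T$--algebra spectral sequence machinery of Johnson--Noel.
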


\begin{proof}
The adjunction between commutative $\Sp$--algebras and commutative $H\F_p$--algebras gives
\[\mathrm{map}_{\Sp \mhyphen cAlg}(X,Y) \cong \mathrm{map}_{H\F_p \mhyphen cAlg}(H\F_p \wedge X,Y).\]
Therefore the setting of Noel's spectral sequence that calculates the homotopy groups of $\mathrm{map}_{H\F_p \mhyphen cAlg}(H\F_p \wedge X,Y)$ provides us the spectral sequence above.

Noel's spectral sequence is a special case of the $T$--algebra spectral sequence of \cite{johnson2014lifting}. Therefore Theorem 4.5 of \cite{johnson2014lifting} gives us the obstruction theoretical results.
\end{proof}

\section{Dyer--Lashof operations and the dual \\ 
Steenrod algebra} \label{Dyer Lashof}

 For a commutative ring spectrum $R$ we denote the free commutative algebra functor from $R$--modules to commutative $R$--algebras by $\Pp_R$. This functor is homotopically well behaved and induces a monad on $Ho(R \mhyphen mod)$ and the algebras over this monad are called $H_\infty$ $R$--algebras. Therefore, an $E_\infty$ algebra is an $H_\infty$ algebra. The converse to this is shown to be false by counter-examples in \cite{noel2009h} and \cite{lawson2015note}.  

Dyer--Lashof operations are power operations, just like the Steenrod operations, that are constructed in a way to act on the homotopy ring of $H_\infty$ $H\F_p$--algebras in \cite{brunerh}. Equivalently, they act on the homology ring of $H_\infty$ $\F_p$--DGAs. Indeed the category of $H_\infty$ $H\F_p$--algebras is equivalent to the category of graded commutative rings over $\F_p$ with Dyer--Lashof operations satisfying the allowability and $p$th power conditions, which are called unstable algebras over the Dyer--Lashof algebra, see the discussion in section 3 of \cite{lawson2015note}. 

For each integer $s$, there is a Dyer--Lashof operation denoted by $\mathrm{Q}^s$. These operations are preserved under $H_\infty$ $H\F_p$--algebra morphisms and hence $E_\infty$ $H\F_p$--algebra morphisms. 
The operation $\mathrm{Q}^s$ increases the degree by $2s(p-1)$ for odd primes and by $s$ for $p=2$. For an element $x$ in the homotopy ring of a commutative $H\F_p$--algebra, the unstable Dyer--Lashof operations satisfy the following properties (the properties for $p=2$ are given in parentheses). 
\begin{equation*}
    \mathrm{Q}^sx = 0 \ \text{for} \  2s < \lvert x \rvert \  (\text{for} \  s < \lvert x  \rvert )
\end{equation*}
\begin{equation*}
    \mathrm{Q}^sx = x^p \ \text{for} \  2s = \lvert x \rvert \  (\text{for} \  s = \lvert x  \rvert )
\end{equation*}
\begin{equation*}
    \mathrm{Q}^s 1 = 0 \ \text{for} \ s \neq 0
\end{equation*}
Also, these operations satisfy the Cartan formula and the Adem relations as in Chapter III Theorem 1.1 of \cite{brunerh}.

As mentioned earlier, for a commutative $H\F_p$--algebra $X$, $\Pp_{H\F_p}(X)_*$  is the free unstable algebra over the Dyer--Lashof algebra 
 generated by $X_*$.   
\begin{theorem} \label{BakerfreeDL} \cite{baker2012calculating}
     $\Pp_{H\F_p}(X)_*$ is the free commutative graded $\F_p$--algebra generated by $\mathrm{Q}^I x_j$ where $x_j$'s form a basis for $X_*$ and $I = (\varepsilon_1,i_1,\varepsilon_2,...,\varepsilon_n,i_n)$ is admissible and satisfies  \textnormal{excess}$(I) + \varepsilon_1 >\lvert x_j \rvert$. 
\end{theorem}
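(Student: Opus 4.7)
The plan is to reduce the computation to a statement about extended powers in spectra, and then invoke the classical Dyer--Lashof computation. Every $H\F_p$--module is equivalent, up to weak equivalence, to a wedge of suspensions of $H\F_p$ realizing a basis of its homotopy: for a basis $\{x_j\}$ of $X_*$, set $Y = \bigvee_j \Sigma^{|x_j|}\Sp$, so that $X \simeq H\F_p \wedge Y$ as $H\F_p$--modules. Since extension of scalars along $\Sp \to H\F_p$ is strong symmetric monoidal and hence commutes with the free commutative algebra functor, we obtain
$$\Pp_{H\F_p}(X) \simeq \Pp_{H\F_p}(H\F_p \wedge Y) \simeq H\F_p \wedge \Pp_{\Sp}(Y).$$

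The free commutative $\Sp$--algebra on $Y$ splits as $\Pp_{\Sp}(Y) \simeq \bigvee_{n \geq 0} D_n(Y)$, where $D_n(Y) = (E\Sigma_n)_+ \wedge_{\Sigma_n} Y^{\wedge n}$ denotes the $n$th extended power. Consequently,
$$\Pp_{H\F_p}(X)_* = H_*(\Pp_{\Sp}(Y); \F_p) = \bigoplus_{n \geq 0} H_*(D_n(Y); \F_p).$$
The mod--$p$ homology of the extended powers of a wedge of spheres is precisely what the Dyer--Lashof construction of Kudo--Araki, Dyer--Lashof, and Cohen--Lada--May was designed to compute: it equips $H_*(Y;\F_p)$ with the Dyer--Lashof operations $\mathrm{Q}^s$ (and their Bockstein conjugates $\beta \mathrm{Q}^s$ at odd primes), together with a commutative product, and the resulting graded object is free on the admissible monomials $\mathrm{Q}^I x_j$ subject to the excess bound $\mathrm{excess}(I) + \varepsilon_1 > |x_j|$, as established in \cite{brunerh}.

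The admissibility constraint is forced by the Adem relations, which express $\mathrm{Q}^I$ for non-admissible $I$ as an $\F_p$--linear combination of admissible monomials of the same total degree. The excess constraint is forced by the unstability relations $\mathrm{Q}^s x = 0$ for $2s < |x|$ (and $s < |x|$ at $p=2$), together with $\mathrm{Q}^s x = x^p$ at the top excess, which converts excess-violating admissible monomials into products of lower-length admissible monomials via the Cartan formula. These two reductions together cut the raw polynomial algebra on all $\mathrm{Q}^I x_j$ down to the polynomial algebra on the admissible, excess-respecting basis described in the statement.

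The principal obstacle is the identification of the operations appearing in $H_*(D_\bullet(Y); \F_p)$ --- which arise from the homology of $B\Sigma_n$ with coefficients in the sign representation and its tensor powers --- with the Dyer--Lashof operations acting intrinsically on the homotopy of $H_\infty$ $H\F_p$--algebras, together with the verification that these operations act freely with the expected set of generators. This is the content of Chapter I and III of \cite{brunerh}; Baker's approach in \cite{baker2012calculating} organizes the computation directly at the level of commutative $H\F_p$--algebras, bypassing the extended-power model, and is the reference the authors cite.
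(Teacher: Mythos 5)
The paper does not prove this statement at all: it is quoted verbatim from Baker's preprint \cite{baker2012calculating}, so there is no internal argument to compare yours against. Judged on its own, your sketch is the standard and correct route to this computation. The reduction $X \simeq H\F_p \wedge Y$ for $Y$ a wedge of sphere spectra indexed by a basis of $X_*$ is valid because $\F_p$ is a field, the base-change identification $\Pp_{H\F_p}(H\F_p \wedge Y) \simeq H\F_p \wedge \Pp_{\Sp}(Y)$ is the correct use of the strong monoidality of extension of scalars, and the passage to $\bigoplus_n H_*(D_n(Y);\F_p)$ with the Cohen--Lada--May/BMMS answer (free graded-commutative algebra on admissible $\mathrm{Q}^I x_j$ of excess exceeding $\lvert x_j\rvert$, the borderline case being absorbed as $p$th powers) is exactly how the freeness statement is classically established. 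Two points deserve explicit mention if this were to be written out: first, the splitting $\Pp_{\Sp}(Y)\simeq \bigvee_n D_n(Y)$ with $D_n(Y)=(E\Sigma_n)_+\wedge_{\Sigma_n}Y^{\wedge n}$ requires that the strict symmetric-power construction compute the derived one, which is where the positive (cofibrant) hypotheses on $Y$ enter; second, the identification of the extended-power operations with the intrinsic Dyer--Lashof operations on homotopy of $H_\infty$ $H\F_p$--algebras is genuine content, which you correctly flag rather than elide. Your sketch is therefore an acceptable outline, with the substantive verifications delegated to \cite{brunerh}, whereas Baker's cited proof organizes the same computation through topological Andr\'e--Quillen theory directly in commutative $H\F_p$--algebras; the extended-power route is more elementary and self-contained, while Baker's buys compatibility with the derivation-theoretic framework the rest of the paper uses.
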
 

The definition of admissibility and excess can be found in \cite{may1971homology}.
\\

\textbf{Dual Steenrod Algebra.} Now we discuss the dual Steenrod algebra and the Dyer--Lashof operations on it. The dual Steenrod algebra is first described by Milnor in \cite{milnor1958steenrod} and the Dyer--Lashof operations on it are first studied in Chapter III of \cite{brunerh}. We also recommend \cite{baker2013power}. The dual Steenrod algebra $\mathcal{A}_* \cong {H\F_p}_* H\F_p $ is a free graded commutative $\F_p$--algebra. For $p=2$, it is given by two different standard sets of generators 
\[ \mathcal{A}_* = \F_2[\xi_r \ \lvert \ 
 r\geq 1] = \F_2[\zeta_r \ \lvert \ r \geq 1] \]

\noindent where $\lvert \xi_r \rvert = \lvert \zeta_r \rvert =  2^r-1$. The transpose map of the smash product applied to $H\F_p \wedge H\F_p$ induces an automorphism of the dual Steenrod algebra denoted by $\chi$. The reason we have two different set of generators above is to keep track of the action of $\chi$. We have $\chi(\xi_r)=\zeta_r$. The generating series $\xi(t) = t +  \Sigma_{r \leq 1} \xi_r t^{2r}$ and $\zeta(t) = t + \Sigma_{r \leq 1} \zeta_r t^{2r} $ are composition inverses in the sense that $\zeta(\xi(t)) = t = \xi (\zeta(t))$. This in particular shows that $\xi_1 = \zeta_1$.

Since commutative $H\F_p$--algebras form the category of commutative $\Sp$--algebras under $H\F_p$,  $H\F_p \wedge H\F_p$ can be given two different commutative $H\F_p$--algebra structures using the maps $H\F_p = H\F_p \wedge \Sp \to H\F_p \wedge H\F_p$ and $H\F_p = \Sp \wedge H\F_p \to H\F_p \wedge H\F_p$. We call these maps $g_1$ and $g_2$ respectively. We denote the Dyer--Lashof operations induced on $\mathcal{A}_*$ from the first structure map by $\mathrm{Q}^s$ and the second structure map by $\widetilde{\mathrm{Q}}^s$. 

Since the transpose map induces an isomorphism of commutative $H\F_p$--algebras, $\chi$ preserves the corresponding Dyer--Lashof operations ie $\chi(\mathrm{Q}^s x) = \widetilde{\mathrm{Q}}^s \chi(x)$. For $p=2$, $\mathcal{A}_*$ is generated as an algebra over the Dyer--Lashof algebra by $\xi_1$, and we have 
\[ \mathrm{Q}^{2^s-2} \xi_1 = \zeta_s \ \ \text{for} \ s \geq 1. \]
By using the fact that $\chi$ preserves Dyer--Lashof operations, one obtains 
\[ \widetilde{\mathrm{Q}}^{2^s-2} \xi_1 = \xi_s \ \ \text{for} \ s \geq 1. \]

For an odd prime $p$, the dual Steenrod algebra is given by the following. 
\[ \mathcal{A}_* = \F_p[\xi_r \ \lvert \ 
 r\geq 1] \otimes \Lambda(\tau_s \ \lvert \  s \geq 0) = \F_p[\zeta_r \ \lvert \ r \geq 1] \otimes \Lambda(\overline{\tau}_s \ \lvert \  s \geq 0) \]
 
\noindent Where $\lvert \xi_r \rvert = \lvert \zeta_r \rvert =  2(p^r-1)$ and $\lvert \tau_s \rvert = \lvert \overline{\tau}_s \rvert =  2p^s-1$. The action of the antipode map is given by $ \chi(\xi_r) =\zeta_r$ and $\chi(\tau_r)=\overline{\tau}_r $. We use the following formula to relate the two set of generators for the dual Steenrod Algebra, see Section 7 of \cite{milnor1958steenrod} and the proof of Lemma 4.7 in \cite{baker2013power}.
\begin{equation} \label{generators}
\bar{\tau}_s+\bar{\tau}_{s-1}\xi^{p^{s-1}}_1 + \bar{\tau}_{s-2}\xi^{p^{s-2}}_2 + \cdots+ \bar{\tau}_0 \xi_s + \tau_s = 0
\end{equation}

The dual Steenrod algebra is generated by $\tau_0$ as an algebra over the Dyer--Lashof algebra. We have the following formulae for the Dyer--Lashof operations for $s \geq 1$
\[ \mathrm{Q}^{(p^s-1)/(p-1)} \tau_0 = (-1)^s \overline{\tau}_s \]
\[\beta \mathrm{Q}^{(p^s-1)/(p-1)} \tau_0 = (-1)^s \zeta_s. \]
More can be found on the Dyer--Lashof operations on the dual Steenrod algebra in \cite{baker2013power} and in Chapter III of \cite{brunerh}.


\section{Examples of non-trivial $E_\infty$ topological equivalences} \label{section examples}
\subsection{Examples in $E_\infty$ $\F_p$--DGAs}

Here, we discuss the first examples of $E_\infty$ DGAs that are not quasi-isomorphic but are $E_\infty$ topologically equivalent, ie non-trivially $E_\infty$ topologically equivalent. 

The first example we construct is in $E_\infty$ $\F_p$--DGAs.
By the equivalence of $E_\infty$ $\F_p$--DGAs and commutative $H\F_p$--algebras, constructing non-trivially topologically equivalent $E_\infty$ $\F_p$--DGAs is the same as constructing commutative $H\F_p$--algebras that are not weakly equivalent as commutative $H\F_p$--algebras but are weakly equivalent as commutative $\Sp$--algebras. 

As we noted earlier, commutative $H\F_p$--algebras form the category of commutative $\Sp$--algebras under $H\F_p$. There is a model structure induced on the under-category where the weak equivalences, cofibrations and fibrations are precisely the same as for commutative $\Sp$--algebras. 

In our example, we start with a commutative $\Sp$--algebra $X$ and induce two different commutative $H\F_p$--algebra structures on this object by providing two different  commutative $\Sp$--algebra maps from $H\F_p$ to $X$. Clearly these two commutative $H\F_p$--algebras are weakly equivalent (even isomorphic) as commutative $\Sp$--algebras.
We show that these two commutative $H\F_p$--algebras are not weakly equivalent as commutative $H\F_p$--algebras by showing that their homotopy rings are not isomorphic as algebras over the Dyer--Lashof algebra. By the discussion of Section \ref{Dyer Lashof} this shows that these non-trivially $E_\infty$ topologically equivalent $E_\infty$ $\F_p$--DGAs are furthermore not equivalent as $H_\infty$ $\F_p$--DGAs. 
\begin{example} \label{example Fp}
For an odd prime $p$, the $E_\infty$ $\F_p$--DGAs we produce have the same homology ring given by $ \Lambda_{\F_p}[\tau_0,\xi_1,\tau_1] / (\tau_0 \tau_1,\tau_1\xi_1,\tau_0 \xi_1 - \tau_1)$ where the degrees of the generators are those of the dual Steenrod algebra ie  $\lvert \tau_0 \rvert = 1$, $\lvert \xi_1 \rvert = 2(p-1)$ and $\lvert \tau_1 \rvert = 2p-1$. However, the homology groups of these $E_\infty$ $\F_p$--DGAs are not isomorphic as algebras over the Dyer--Lashof algebra. In one of them, $\mathrm{Q}^1(\tau_0) = \tau_1$ and in the other, $\mathrm{Q}^1(\tau_0) = 0$. Therefore these two $E_\infty$ $\F_p$--DGAs are not equivalent as $H_\infty$ $\F_p$--DGAs and therefore they are not quasi-isomorphic.

For $p=2$ the homology ring of the two $E_\infty$ topologically equivalent $E_\infty$ $\F_2$--DGAs are $\F_2[\xi_1]/(\xi_1^4)$ where $\lvert \xi_1 \rvert= 1$ as in the dual Steenrod algebra. In the homology of the first $E_\infty$ $\F_2$--DGA, $\mathrm{Q}^2(\xi_1)= \xi_1^3$ and in the other one, $\mathrm{Q}^2(\xi_1) = 0$. Again, these two $E_\infty$ $\F_2$--DGAs are not quasi-isomorphic because their homology rings are not isomorphic as algebras over the Dyer--Lashof algebra and therefore they are not equivalent as $H_\infty$ $\F_2$--DGAs.

First, we discuss our example for $p=2$. For this, we are going to use Postnikov sections for commutative ring spectra. These were first introduced in Section 8 of \cite{basterra1999andre} for studying Postnikov towers of commutative ring spectra. The $n$th Postnikov section of a connected commutative ring spectrum $Z$ is a map $Z\to P_n Z$ that induces an isomorphism on $\pi_i (Z) \to \pi_i(P_n Z)$ for $i\leq n$ and for which $\pi_i P_nZ=0$ for $i>n$. Let $H\F_2 \wedge H\F_2 \to P_3 (H\F_2 \wedge H\F_2)$ be the third Postnikov section of $H\F_2 \wedge H\F_2$ as a commutative $\Sp$--algebra, we have $\pi_* (P_3 (H\F_2 \wedge H\F_2)) = \F_2 [\xi_1,\xi_2] / (\xi_1^4, \xi_2^2, \xi_1 \xi_2)$ with $\lvert \xi_1 \rvert = 1$ and $\lvert \xi_2 \rvert= 3$ ie the dual Steenrod algebra quotiented out by the ideal of elements of degree 4 and higher. By using Lemma \ref{cell} we kill the element $\xi_1^3 + \xi_2$ in $\pi_*(P_3 (H\F_2 \wedge H\F_2))$ and then taking the third Postnikov section, we obtain another commutative $\Sp$--algebra $X$ with $\pi_* (X) = \F_2 [\xi_1,\xi_2] / (\xi_1^4, \xi_2^2, \xi_1 \xi_2,\xi_1^3 + \xi_2) = \F_2[\xi_1]/(\xi_1^4)$. The reason we kill $\xi_1^3+ \xi_2$ is because it is equal to $\zeta_2$ in the dual Steenrod algebra, this follows from the generating series we discuss in Section \ref{Dyer Lashof}. Note that for Lemma \ref{cell}, one can use the commutative $H\F_2$--algebra structure on $P_3(H\F_2 \wedge H\F_2)$ induced by the map $H\F_2 \wedge \Sp \to H\F_2 \wedge H\F_2 \to P_3(H\F_2 \wedge H\F_2)$.

Furthermore, we have a map $P_3 (H\F_2 \wedge H\F_2) \to X$ with the induced map on the homotopy rings being the canonical one. By pre-composing this map with the map into the Postnikov section $H\F_2 \wedge H\F_2 \to P_3 (H\F_2 \wedge H\F_2)$, we obtain a map of commutative $\Sp$--algebras $f\co H\F_2\wedge H\F_2 \to X$. We construct two commutative $\Sp$--algebra maps from $H\F_2$ to $X$ as shown in the diagram below. 
 \begin{equation} \label{diag Fp example}
 \begin{tikzcd}
 H\F_2 \cong H\F_2 \wedge \Sp \arrow[rd,"g_1"]
 \arrow[dr]
 &
 &
 \\
 &H\F_2 \wedge H\F_2 \arrow[r,"f"]    &X 
 \\
 H\F_2 \cong \Sp \wedge H\F_2  
 \arrow[ur,swap,"g_2"]
 \end{tikzcd}
 \end{equation}
The maps $g_1$ and $g_2$ induce two commutative $H\F_2$--algebra structures on $X$. The 
commutative $H\F_2$--algebra with unit $f \circ g_1$ is denoted by  $X_1$ and with unit $f\circ g_2$ by $X_2$. 

As we discuss in Section \ref{Dyer Lashof}, $H\F_2 \wedge H\F_2$ can be given two commutative $H\F_2$--algebra structures through the maps $g_1$ and $g_2$ and we call the two associated commutative $H\F_2$--algebras $Y_1$ and $Y_2$. The Dyer--Lashof operations on $\pi_*(Y_1)$ are denoted by $\mathrm{Q}^s$ and the Dyer--Lashof operations on $\pi_*(Y_2)$ are denoted by $\widetilde{\mathrm{Q}}^s$ as in Section \ref{Dyer Lashof}. 

Because morphisms in commutative $H\F_2$--algebras are morphisms of commutative $\Sp$--algebras under $H\F_2$, from the map $f$ alone we obtain two $H\F_2$--algebra maps $g\co Y_1 \to X_1$ and $h \co Y_2 \to X_2$. These maps induce maps that preserve Dyer--Lashof operations in the homotopy rings and we use this to understand the Dyer--Lashof operations on $\pi_*(X_1)$ and $\pi_*(X_2)$. On $\pi_*(X_1)$, 
\[\mathrm{Q}^2(\xi_1) = \mathrm{Q}^2(g_*(\xi_1)) = g_*(\mathrm{Q}^2(\xi_1)) = g_*(\zeta_2)= \zeta_2= \xi_1^3 +\xi_2 = 0.\] 
On $\pi_*(X_2)$, 
\[\mathrm{Q}^2(\zeta_1) =\mathrm{Q}^2(h_*(\zeta_1))=  h_*(\widetilde{\mathrm{Q}}^2(\zeta_1)) = h_*(\xi_2)= \xi_2 \neq 0.\] 

Therefore, $\pi_*(X_1)$ and $\pi_*(X_2)$ are not isomorphic as algebras over the  Dyer--Lashof algebra as desired.



For odd primes $p$, the construction of an example is similar. By Equation \eqref{generators} in Section \ref{Dyer Lashof}, $\bar{\tau}_1 = \tau_0 \xi_1- \tau_1$. Therefore one can use Lemma \ref{cell} to kill $\tau_0 \xi  - \tau_1$ which kills $\mathrm{Q}^1 \tau_0$. The rest of the arguments follow similarly.

\end{example}

\begin{lemma} \label{cell}

Let $X$ be a connective commutative $H\F_p$--algebra with\\ $\pi_0(X)= \F_p$  and $\pi_i(X)= 0$ for $i>n$. Given $x \in \pi_n(X) $ there is a commutative $\Sp$--algebra $Y$ and a map of commutative $\Sp$--algebras $X \to Y$ which induces the morphism $X_* \to X_*/(x)$ on the level of homotopy groups.
\end{lemma}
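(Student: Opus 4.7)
The plan is to construct $Y$ as the commutative $\Sp$--algebra pushout that attaches a commutative cell along $x$. Using the free-forgetful adjunction between commutative $\Sp$--algebras and spectra, the class $x \in \pi_n(X)$ corresponds to a map $\varphi \co \Pp_{\Sp}(S^n) \to X$ of commutative $\Sp$--algebras. Let $\epsilon \co \Pp_{\Sp}(S^n) \to \Sp$ denote the augmentation sending the generating class to zero, and define $Y$ to be the homotopy pushout
\begin{equation*}
\begin{tikzcd}
\Pp_{\Sp}(S^n) \arrow[r,"\varphi"] \arrow[d,"\epsilon"'] & X \arrow[d] \\
\Sp \arrow[r] & Y
\end{tikzcd}
\end{equation*}
in commutative $\Sp$--algebras. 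The induced map $X \to Y$ sends $x$ to $0$ in homotopy, so the induced homomorphism $\pi_*X \to \pi_*Y$ automatically factors through $X_*/(x)$.

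To identify this factored map with the quotient $X_* \to X_*/(x)$, the idea is to exploit the commutative $H\F_p$--algebra structure on $X$ and base-change the pushout to obtain
\begin{equation*}
Y \simeq X \wedge^{L}_{\Pp_{H\F_p}(\Sigma^n H\F_p)} H\F_p.
\end{equation*}
By Theorem \ref{BakerfreeDL}, $\pi_*\Pp_{H\F_p}(\Sigma^n H\F_p)$ is the free graded commutative $\F_p$--algebra on a class $\iota$ in degree $n$ together with Dyer--Lashof generators $\mathrm{Q}^I \iota$, all of which lie in degrees strictly greater than $n$. Under the lifted structure map, $\iota \mapsto x$ while $\mathrm{Q}^I \iota \mapsto 0$ for $I \neq \emptyset$ by the hypothesis that $\pi_i X = 0$ for $i > n$.

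Finally, I would run the K\"unneth/Tor spectral sequence for this relative smash to extract $\pi_*Y$. The $E_2^{0,*}$ line is the underived tensor product $\pi_*X \otimes_{\pi_*\Pp_{H\F_p}(\Sigma^n H\F_p)} \F_p$, which equals $X_*/(x)$ because only $\iota$ contributes non-trivially to the relevant ideal; the edge map realises the required map $X_* \to X_*/(x)$. The main obstacle is controlling the higher Tor contributions coming from the Dyer--Lashof generators, but because these all lie in total degrees exceeding $n+1$, they cannot affect the identification of the image of $X_* \to Y_*$ with $X_*/(x)$ in the range where $X_*$ is non-zero.
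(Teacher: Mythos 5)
Your construction is essentially the same as the paper's: both attach a commutative cell along $x$ and identify the result through base change to $H\F_p$ and the K\"unneth spectral sequence, using that the generators of the free unstable algebra over the Dyer--Lashof algebra other than the fundamental class sit in degrees above $n$. The base-change identification $X \wedge^{L}_{\Pp_{\Sp}(S^n)}\Sp \simeq X\wedge^{L}_{\Pp_{H\F_p}(\Sigma^n H\F_p)} H\F_p$ is correct, the computation $E_2^{0,*}\cong X_*/(x)$ is right, and so is your estimate that the higher $\mathrm{Tor}$ groups live in total degree above $n$.

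The gap is at the very end. The lemma asks for a map $X\to Y$ that induces the morphism $X_*\to X_*/(x)$, which requires $\pi_*Y\cong X_*/(x)$; your homotopy pushout only satisfies $\pi_iY\cong (X_*/(x))_i$ for $i\le n$. Above degree $n$ the higher $\mathrm{Tor}$ groups do in general survive to $\pi_*Y$: for instance $\mathrm{Tor}_1$ over the subalgebra generated by $\iota$ records the annihilator of $x$ in $X_*$, which contains all of $X_{>0}$ because $X_*$ vanishes above degree $n$, and each Dyer--Lashof generator $\mathrm{Q}^I\iota$ acting by zero contributes further Koszul classes. So your map induces $X_*\to\pi_*Y$ with $\pi_*Y$ strictly larger than $X_*/(x)$; the closing sentence of your argument only shows the discrepancy is invisible in degrees $\le n$, which is weaker than the statement (and the example that invokes this lemma needs the homotopy ring of the output on the nose). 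The fix is one additional step: compose with the $n$th Postnikov section $Y\to P_nY$ in commutative $\Sp$--algebras, which is exactly what the paper does. The paper also inserts a truncation earlier, replacing $\Pp_{H\F_p}(\Sigma^nH\F_p)$ by $Z=P_n\Pp_{H\F_p}(\Sigma^nH\F_p)$ with $\pi_*Z=\Lambda_{\F_p}[x_n]$ before forming the relative smash, so that its $\mathrm{Tor}$ computation runs over an exterior algebra with the standard small resolution; your version over the full free algebra works as well, at the cost of the Koszul degree count you sketch.
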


\begin{proof} 
Let the $H\F_p$--module map $\Sigma^n H\F_p \to X$ represent $x$. By adjunction and by applying the $n$th Postnikov section functor, we obtain the map $P_n( \Pp_{H\F_p} (\Sigma^n H\F_p)) \to P_n(X)\simeq X$. The homotopy ring $\pi_*(\Pp_{H\F_p} (\Sigma^n H\F_p))$ is the free unstable algebra over the  Dyer--Lashof algebra generated by an element of degree $n$. Therefore we obtain that $\pi_*(P_n( \Pp_{H\F_p} (\Sigma^n H\F_p))) = \Lambda_{\F_p} [x_n]$ where $\lvert x_n \rvert = n$. Let $Z$ denote $P_n( \Pp_{H\F_p} (\Sigma^n H\F_p))$. The required $Y$ is $P_n(H\F_p \wedge_Z X)$ where $H\F_p$ is a commutative $Z$--algebra by the map $Z \to P_0Z = H\F_p$. 

The homotopy groups of $H\F_p \wedge_Z X$ can be calculated using the K{\"u}nneth spectral sequence whose $E^2$ page is $\text{Tor}^{\Lambda_{\F_p}[x_n]}_{*,*}(\F_p,X_*)$. We want to show that for degree less than $n+1$, $\pi_*(H\F_p \wedge_Z X)$  and $ \F_p \otimes_{\Lambda_{\F_p}[x_n]} X_*$ agree.

There is a standard resolution of $\F_p$ over $\Lambda_{\F_p}[x_n]$ which is $\Sigma^{kn} \Lambda_{\F_p} [x_n]$ at homological degree $k$.  Therefore we have $\text{Tor}^{\Lambda_{\F_p}[x_n]}_{k,l}(\F_p,X_*)= 0$ for $k>0$ and $l< n$ and hence the only terms that contribute to $\pi_i(H\F_p \wedge_Z X)$ for $i \leq n$ are in $E^2_{0,*}$. Since the differentials on these terms hit the second quadrant, they are zero. We obtain $\pi_i (H\F_p \wedge_Z X) \cong \text{Tor}^{\Lambda_{\F_p}[x_n]}_{0,i}(\F_p,X_*) \cong (\F_p \otimes_{\Lambda_{\F_p}} X_*)_i $ for $i \leq n$. Because $\F_p \otimes_{\Lambda_{\F_p}[x_n]} X_*$ is concentrated in degrees between $0$ and $n$, $\pi_*Y \cong \pi_* (P_n( H\F_p \wedge_Z X)) \cong \F_p \otimes_{\Lambda_{\F_p}} X_*$.  Since the image of $x_n$ in $\pi_*(X)$ is $x$, we have $\pi_*Y \cong \F_p \otimes_{\Lambda_{\F_p}} X_* \cong \pi_*(X)/(x)$ and $X \to Y$ induces the desired morphism $\pi_*X \to \pi_*X/(x)$ on the homotopy ring. For this Lemma, we forget the $H\F_p$ structure and use the corresponding commutative $\Sp$--algebra map $X \to Y$.

\end{proof}
\subsection{Examples in $E_\infty$ $\Z$--DGAs}

Now we discuss examples of non-trivial $E_\infty$ topological equivalences in $E_\infty$ $\Z$--DGAs. Theorem \ref{examples 2} below gives a general scenario where examples of non-trivially $E_\infty$ topologically equivalent $E_\infty$ $\Z$--DGAs occur.  
We start with Theorem \ref{examples 3} which provides examples that have a simple construction. This theorem uses the construction of the example of Dugger and Shipley that we discuss in Example \ref{previous ex 2} above. For $E_\infty$ DGAs, this construction is generalized to odd primes. 

    \begin{theorem} \label{examples 3} 
    Let $X$ and $Y$ denote the commutative $H\Z$--algebras whose underlying commutative $\Sp$--algebras are $H\Z \wedge H\F_p$ and whose commutative $H\Z$--algebra structures are given by the map 
    \[H\Z \cong H\Z \wedge \Sp \to H\Z \wedge H\F_p \]
    and the map 
    \[H\Z \cong \Sp \wedge H\Z \to H\Z \wedge H\F_p \]
    respectively. 
    
    These commutative $H\Z$--algebras $X$ and $Y$ are not weakly equivalent. Since the underlying commutative $\Sp$--algebras of $X$ and $Y$ are the same, we deduce that the $E_\infty$ $\Z$--DGAs corresponding to $X$ and $Y$ are not quasi-isomorphic but are $E_\infty$ topologically equivalent.
    
    \end{theorem}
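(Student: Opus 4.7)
My plan is to dispose of the $E_\infty$ topological equivalence (which is immediate) and then show $X$ and $Y$ are not weakly equivalent as commutative $H\Z$-algebras by pinning down the unique candidate commutative $H\Z$-algebra map $\phi : X \to Y$ via a universal property and exhibiting an explicit factorization of $\phi$ through $H\F_p$. The topological equivalence is trivial since $X$ and $Y$ share the same underlying commutative $\Sp$-algebra $H\Z \wedge H\F_p$; the substance of the theorem is the non-equivalence as commutative $H\Z$-algebras.

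The key observation is that $X = H\Z \otimes_\Sp H\F_p$ in commutative $\Sp$-algebras under $H\Z$ is the free commutative $H\Z$-algebra on $H\F_p$, so for any commutative $H\Z$-algebra $Z$ there is a natural equivalence
\[\mathrm{map}_{H\Z\mhyphen cAlg}(X, Z) \simeq \mathrm{map}_{\Sp\mhyphen cAlg}(H\F_p, Z).\]
Specializing to $Z = Y$: since $Y$ is a connective commutative $\Sp$-algebra with $\pi_0 Y = \F_p$, the universal property of $H\F_p$ as the connective commutative $\Sp$-algebra with $\pi_0 = \F_p$ makes the right-hand mapping space contractible. Hence there is a unique (up to homotopy) commutative $H\Z$-algebra map $\phi : X \to Y$, corresponding under the adjunction to the canonical right-factor inclusion $\iota_R : H\F_p \to Y$.

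Explicitly, $\phi = \mu_Y \circ (\eta_R \wedge \iota_R)$; using the factorization $\eta_R = \iota_R \circ r$ (where $r : H\Z \to H\F_p$ is reduction) and that $\iota_R$ is a commutative $\Sp$-algebra map, this simplifies to
\[\phi \simeq \iota_R \circ \mu_{H\F_p} \circ (r \wedge 1) \co H\Z \wedge H\F_p \to H\F_p \wedge H\F_p \to H\F_p \to Y,\]
exhibiting $\phi$ as factoring through $H\F_p$. Since $H\F_p$ has homotopy concentrated in degree $0$, the induced map $\phi_*$ has image in $\pi_0(Y) = \F_p$, whereas $\pi_*(X) = \pi_*(Y) = H_*(H\Z; \F_p)$ is nontrivial in positive degrees (containing, e.g., $\xi_1$ in degree $2(p-1)$ for odd $p$, or degree $1$ for $p = 2$). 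Hence $\phi$ is not a weak equivalence, contradicting any assumed commutative $H\Z$-algebra equivalence $X \simeq Y$.

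The main technical point requiring care is the contractibility of $\mathrm{map}_{\Sp\mhyphen cAlg}(H\F_p, Y)$; this is standard and follows from a Postnikov-tower analysis of $Y$, and could be established in the language of the paper via the obstruction spectral sequence of Theorem \ref{obstruction} applied to $\phi = \mathrm{id}$ on $\pi_0$ inductively up the Postnikov tower of $Y$.
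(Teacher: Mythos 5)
Your reduction via the base-change adjunction $\mathrm{map}_{H\Z \mhyphen cAlg}(H\Z\wedge H\F_p,Y)\simeq \mathrm{map}_{\Sp\mhyphen cAlg}(H\F_p,U(Y))$ is valid, and your factorization of the adjoint of $\iota_R$ through $H\F_p$ is a correct and tidy way to see that \emph{that particular} map is not an equivalence. But the entire weight of the argument then rests on the claim that $\mathrm{map}_{\Sp\mhyphen cAlg}(H\F_p,U(Y))$ is contractible (or at least connected), and this is a genuine gap. The ``universal property of $H\F_p$ as the connective commutative $\Sp$--algebra with $\pi_0=\F_p$'' that you invoke is false: there is no such universal property in commutative $\Sp$--algebras, and the paper's own Example \ref{example Fp} is built precisely on the failure of this principle --- there the connective commutative $\Sp$--algebra $X$ has $\pi_0 X=\F_p$ and admits two non-homotopic commutative $\Sp$--algebra maps $f\circ g_1, f\circ g_2\co H\F_p\to X$, distinguished by the Dyer--Lashof structures they induce. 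So connectedness of $\mathrm{map}_{\Sp\mhyphen cAlg}(H\F_p,H\Z\wedge H\F_p)$ is a substantive claim that must be proved for this specific target. The fallback you offer (obstruction theory up the Postnikov tower) is not routine either: in the spectral sequence of Theorem \ref{obstruction} the uniqueness obstructions live in $\mathrm{Der}^t_{\mathcal{R}\mhyphen alg}(\mathcal{A}_*,\,\cdot\,)$, and the dual Steenrod algebra $\mathcal{A}_*$ is \emph{not} a free unstable algebra over the Dyer--Lashof algebra, so these groups do not vanish for formal reasons; note also that the paper's Proposition \ref{calculation 1}, the one mapping-space computation of this kind actually carried out, applies only to \emph{co-connective} targets. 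Without this lemma, a general $g\co H\F_p\to U(Y)$ only gives an adjoint factoring through $H\F_p\wedge H\F_p$ (not through $H\F_p$), whose homotopy is all of $\mathcal{A}_*$, so non-surjectivity on $\pi_*$ is no longer automatic.

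By contrast, the paper's proof avoids any mapping-space computation: it assumes a commutative $H\Z$--algebra equivalence $\psi$, applies ${H\F_p}_*$ to the resulting commuting triangle under $H\Z$, identifies ${H\F_p}_*U(X)\cong {H\F_p}_*H\Z\otimes_{\F_p}\mathcal{A}_*$, and derives a contradiction by chasing the unique degree-one class $1\otimes\tau_0$ and its Dyer--Lashof image $\mathrm{Q}^1(1\otimes\tau_0)=1\otimes\overline{\tau}_1$ against the injectivity of ${H\F_p}_*\psi$. If you want to salvage your route, you must either prove the connectedness of $\mathrm{map}_{\Sp\mhyphen cAlg}(H\F_p, H\Z\wedge H\F_p)$ directly, or rule out that the adjoint of an arbitrary $g$ is an equivalence --- and the natural tool for the latter is exactly the Dyer--Lashof argument the paper uses.
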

    \begin{proof}
    Assume that $X$ and $Y$ are weakly equivalent as commutative $H\Z$--algebras. Taking cofibrant and fibrant replacements, we assume that there is a weak equivalence $\begin{tikzcd}[sep=small] \psi  \co X \arrow[r,"\sim"]&Y\end{tikzcd}$ of commutative $H\Z$--algebras. This means that there is the following diagram in commutative $\Sp$--algebras. 
         \begin{equation} \label{diag ex 3}
 \begin{tikzcd}
& H\Z \arrow[rd,"\varphi_Y"] \arrow[ld,swap,"\varphi_X"]
\\
 U(X) \arrow[rr,"\simeq",swap] \arrow[rr,"\psi"]
 & & U(Y)
 \end{tikzcd}
 \end{equation}
 Here, $U$ denotes the forgetful functor to commutative $\Sp$--algebras and $\varphi_X$ and $\varphi_Y$ denote the commutative $H\Z$--algebra structure maps of $X$ and $Y$ respectively.     Note that by the Künneth spectral sequence,
    \[{H\F_p}_*U(X) \cong {H\F_p}_* (H\Z \wedge H\F_p) \cong {H\F_p}_*H\Z \otimes_{\F_p} {H\F_p}_* H\F_p.\]
 Taking the $H\F_p$ homology of the diagram above, we obtain the following.
    \begin{equation} \label{diag 2 ex 3}
 \begin{tikzcd}
& {H\F_p}_* H\Z \arrow[rd,"{H\F_p}_* \varphi_Y"] \arrow[ld,swap,"{H\F_p}_* \varphi_X"]
\\
 {H\F_p}_*H\Z \otimes_{\F_p} {H\F_p}_* H\F_p \arrow[rr,"\simeq",swap] \arrow[rr,"{H\F_p}_* \psi"]
 & & {H\F_p}_*H\Z \otimes_{\F_p} {H\F_p}_* H\F_p
 \end{tikzcd}
 \end{equation}

    With this identification, ${H\F_p}_* \varphi_X (x) = x \otimes 1$. Similarly, we have ${H\F_p}_*U(Y) \cong {H\F_p}_*H\Z \otimes_{\F_p} {H\F_p}_* H\F_p$ and ${H\F_p}_* \varphi_Y (x) = 1 \otimes x$. As noted earlier, the canonical map ${H\F_p}_* H\Z \to {H\F_p}_* H\F_p$ is an inclusion and ${H\F_p}_* H\Z$ is a free commutative ring generated by the same generators as ${H\F_p}_* H\F_p$ except $\tau_0$. The only degree one element in ${H\F_p}_*H\Z\otimes_{\F_p} {H\F_p}_* H\F_p$ is $1 \otimes \tau_0$ and since ${H\F_p}_* \psi$ is an isomorphism, this is mapped by ${H\F_p}_* \psi$ to $1 \otimes \tau_0$. Since $\mathrm{Q}^1 (1 \otimes \tau_0) = 1 \otimes \overline{\tau}_1$, ${H\F_p}_* \psi (1 \otimes \overline{\tau}_1) = 1\otimes \overline{\tau}_1$. But the commutativity of the triangle forces ${H\F_p}_* \psi (\overline{\tau}_1 \otimes 1) = 1 \otimes \overline{\tau}_1$ and this contradicts the injectivity of ${H\F_p}_* \psi$. Therefore $X$ and $Y$ are not equivalent as commutative $H\Z$--algebras. The argument for $p=2$ is similar.

    \end{proof}
    
\begin{theorem} \label{examples 2}
    Let $X$ be either an $E_\infty$ $\F_p$--DGA for an odd prime $p$ that satisfies

    \begin{enumerate}
        \item $H_i X = 0$ for $i= 1$ and $i>2p^2-4$
        \item $H_i X \neq 0$ for either $i=2p-1$ or $i=2p-2$
    \end{enumerate}
    or an $E_\infty$ $\F_2$--DGA that satisfies
       \begin{enumerate}
        \item $H_i X = 0$ for $i=1$ and $i>4$
        \item $H_i X \neq 0$ for $i=2$.
    \end{enumerate}
   For such an $X$, there exists an $E_\infty$ $\Z$--DGA that is $E_\infty$ topologically equivalent to $X$ but not quasi-isomorphic to $X$.
   \end{theorem}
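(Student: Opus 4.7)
My plan is to equip the underlying commutative $\Sp$-algebra of $HX$ with a second commutative $H\Z$-algebra structure inequivalent to the canonical one $\eta\co H\Z \to H\F_p \to HX$, and take $Y$ to be the $E_\infty$ $\Z$-DGA corresponding to that second structure. Because the two structures share the same underlying commutative $\Sp$-algebra, $Y$ and $X$ are automatically $E_\infty$ topologically equivalent; the failure of quasi-isomorphism will be detected through mod-$p$ homology and the Dyer--Lashof action.

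The first step is to analyze $\pi_0\mathrm{map}_{\Sp\mhyphen cAlg}(H\Z, HX)$ at the basepoint $\eta$ using the obstruction spectral sequence of Theorem \ref{obstruction}. Its $E_2^{0,0}$ term is $\mathrm{Hom}_{\mathcal{R}\mhyphen alg}(H\F_{p*}(H\Z), H_*X)$, where $H\F_{p*}(H\Z)$ is the subalgebra of $\mathcal{A}_*$ obtained by omitting $\tau_0$ for odd $p$, or $\xi_1$ for $p=2$. Its lowest-degree algebra generators therefore live in degrees $2p-2$ and $2p-1$ (for odd $p$), or degree $2$ (for $p=2$), which is precisely where the hypotheses of the theorem provide nonzero targets in $H_*X$; simultaneously, the bound $H_iX = 0$ for $i>2p^2-4$ (respectively $i>4$) forces all higher-degree generators to be sent to zero. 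I would exhibit a non-canonical element of $E_2^{0,0}$ by sending $\xi_1$ or $\tau_1$ (for odd $p$, depending on which of $H_{2p-2}X$, $H_{2p-1}X$ is nonzero) or $\xi_1^2$ (for $p=2$) to a chosen nonzero class, verifying compatibility with the unstable relations $\mathrm{Q}^s x = x^p$ and $\mathrm{Q}^s x = 0$ for $2s<\lvert x \rvert$ by invoking the high-degree vanishing of $H_*X$.

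Next, I would lift this $\mathcal{R}$-algebra morphism to an actual $\Sp$-algebra map $\eta'\co H\Z \to HX$ by showing that the obstructions in $\mathrm{Der}^t_{\mathcal{R}\mhyphen alg}(H\F_{p*}(H\Z), HX^{S^{t-1}}_*)$ with $t\geq 2$ vanish. The point is that $HX^{S^{t-1}}_* \cong H_{*+t-1}X$ is concentrated in a shifted, shrinking degree range, while the cotangent generators of $H\F_{p*}(H\Z)$ sit in fixed positive degrees; the numerical hypotheses of the theorem are calibrated so that these ranges become incompatible once $t\geq 2$, killing the relevant Andr\'e--Quillen cohomology. Defining $Y$ to be $HX$ equipped with the new structure map $\eta'$ then produces the required $E_\infty$ $\Z$-DGA that is $E_\infty$ topologically equivalent to $X$.

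The last step is to show that $(HX,\eta)$ and $(HX,\eta')$ are not equivalent as commutative $H\Z$-algebras, by an argument modeled on the proof of Theorem \ref{examples 3}. Any such equivalence would be an $\Sp$-algebra self-equivalence $\psi$ of $HX$ satisfying $\psi\circ\eta = \eta'$. Passing to $\pi_0\mathrm{map}$ and using the identification with $E_2^{0,0}$, this translates to $\psi_* \circ \eta_* = \eta'_*$, where $\psi_*$ is the induced $\mathcal{R}$-algebra automorphism of $H_*X$. But $\eta_*$ annihilates the distinguished generator of $H\F_{p*}(H\Z)$ while $\eta'_*$ sends it to a nonzero class, so $\psi_*(0)=0$ produces a contradiction. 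I expect the main obstacle to be the Andr\'e--Quillen vanishing in the second step: it requires a careful degree-tracking argument for the cotangent complex of $H\F_{p*}(H\Z)$ in the category of unstable algebras over the Dyer--Lashof algebra, and the numerical thresholds $2p^2-4$ and $4$ are evidently chosen to make this vanishing go through while still leaving room for a non-canonical element of $E_2^{0,0}$.
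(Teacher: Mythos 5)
Your overall strategy coincides with the paper's: run the obstruction spectral sequence of Theorem \ref{obstruction} at the basepoint $H\Z \to H\F_p \to X$, choose an element of $E_2^{0,0} = \mathrm{Hom}_{\mathcal{R}\mhyphen alg}({H\F_p}_*H\Z, X_*)$ hitting a nonzero class with a low-degree generator, lift it to a commutative $\Sp$--algebra map that defines the second $H\Z$--structure, and rule out an $H\Z$--algebra equivalence via Dyer--Lashof operations and $H_1X = 0$. The genuine gap is in the step you yourself flag as the main obstacle: the vanishing of $\mathrm{Der}^t_{\mathcal{R}\mhyphen alg}({H\F_p}_*H\Z, X^{S^{t-1}}_*)$ for $t \geq 2$. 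Your proposed mechanism --- that the degree range of $X^{S^{t-1}}_*$ becomes incompatible with the cotangent generators once $t \geq 2$ --- does not work: since $\pi_n(X^{S^{t-1}}) \cong \pi_{n+t-1}(X)$, the module $X^{S^{t-1}}_*$ is concentrated in $[1-t,\, 2p^2-3-t]$, which still contains the generator degrees $2p-2$ and $2p-1$ for all small $t$, and in any case the superscript $t$ on $\mathrm{Der}^t$ is cohomological degree, not an internal degree shift that could create such an incompatibility. The actual argument (Lemma \ref{obstructions zero}) is that ${H\F_p}_*H\Z$ agrees with the free unstable algebra over the Dyer--Lashof algebra on $\zeta_1$ and $\overline{\tau}_1$ through degree $2p^2-4$ (the first discrepancy being the missing free generator $\beta\mathrm{Q}^p\zeta_1$ in degree $2p^2-3$); since $X^{S^{t-1}}_*$ is concentrated in degrees $\leq 2p^2-4$, the cosimplicial object computing $\mathrm{Der}^*$ only sees this range, so the answer is the Andr\'e--Quillen cohomology of a free object, which vanishes in positive degrees. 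This freeness-in-a-range argument is what the threshold $2p^2-4$ (resp.\ $4$) is calibrated for; your degree-incompatibility heuristic would not produce it.

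A smaller issue: in your last step the identity $\psi_* \circ \eta_* = \eta'_*$ does not literally typecheck. The class in $E_2^{0,0}$ attached to a commutative $\Sp$--algebra map $g \co H\Z \to X$ is $\mu_* \circ {H\F_p}_*g$, and $\mu_* \circ {H\F_p}_*\psi \neq \psi_* \circ \mu_*$ in general when $\psi$ is only an $\Sp$--algebra equivalence (this is exactly the point the paper has to argue separately in the proof of Theorem \ref{thm DL}). The correct route, as in the paper's Diagram \eqref{diag zalg 2}, stays in $H\F_p$--homology: the composite $\mathcal{A}_* \to {H\F_p}_*X \xrightarrow{{H\F_p}_*\psi} {H\F_p}_*X \to X_*$ is a ring map preserving Dyer--Lashof operations that kills $\tau_0$ because $H_1X = 0$, hence kills $\overline{\tau}_1 = -\beta\mathrm{Q}^1\tau_0$ (resp.\ $\zeta_1 = -\mathrm{Q}^1\tau_0$, resp.\ $\xi_1^2$ for $p=2$), contradicting $f(\overline{\tau}_1) \neq 0$. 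Your appeal to the proof of Theorem \ref{examples 3} points at this fix, so this is a matter of precision rather than a wrong idea; the missing Andr\'e--Quillen vanishing argument above is the substantive gap.
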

   
   Indeed, we show that the $E_\infty$ $\Z$--DGA we construct in the proof of the above theorem is not quasi-isomorphic to any $E_\infty$ $\F_p$--DGA.
   
   It is clear that $E_\infty$ $\F_p$--DGAs that satisfy the conditions of this theorem exist. One can start with a graded commutative ring that satisfies the above conditions and use the corresponding formal commutative $\F_p$--DGA.
   
   \begin{remark}
   
   A special case of the above theorem gives the example of Dugger and Shipley that we discuss in Example \ref{ex ds}. For this, one uses the formal commutative $\F_2$--DGA with homology the exterior algebra with a generator in degree 2 for $X$. Moreover, our theorem provides a generalization of this example. We start with an $X$ as described in the $p=2$ case above and let $Y$ be the $E_\infty$ $\Z$--DGA we produce in the proof, then $X$ and $Y$ are not quasi-isomorphic as associative $\Z$--DGAs although they are topologically equivalent ie $X$ and $Y$ are non-trivially topologically equivalent as associative DGAs.
   \end{remark}
   \begin{proof}
   
   To produce our $E_\infty$ $\Z$--DGA, we start with an $X$ as above, and construct a commutative $\Sp$--algebra map $\varphi_Y \co H\Z \to U(X)$ using obstruction theory where $U$ is the forgetful functor to commutative $\Sp$--algebras. This gives us a new commutative $H\Z$--algebra $Y$ whose underlying commuative $\Sp$--algebra is $U(X)$. Obstruction theory gives us control over the map induced by $\varphi_Y$ on the $H\F_p$ homology. Using this and Dyer--Lashof operations, we show that $Y$ is not  weakly equivalent as a commutative $H\Z$--algebra to any commutative $H\F_p$--algebra.
   
   We describe our example when $p$ is an odd prime and 
  $H_{2p-1}X \neq 0$. The case $H_{2p-2} X \neq 0$ is similar. We explain the $p=2$ example at the end.
  
   For the obstruction spectral sequence of Theorem \ref{obstruction}, we use the composite map $H\Z \to H\F_p \xrightarrow{\varphi_X} U(X)$ as a base-point where the map $\varphi_X$ is the $H\F_p$ structure morphism of $X$. Using this base point and by setting up the obstruction spectral sequence to calculate the commutative $\Sp$--algebra maps from $H\Z$ to $X$, we obtain that obstructions to lifting a morphism of unstable algebras over the Dyer--Lashof algebra in 
    \[ E_2^{0,0} = \mathrm{Hom}_{\mathcal{R}\mhyphen alg} ({H\F_p}_*H\Z,X_*) \] 
    to a commutative $\Sp$--algebra map from $H\Z$ to $U(X)$ lie in the cohomology groups 
    \[\mathrm{Der}^t_{\mathcal{R}\mhyphen alg} ({H\F_p}_*H\Z,X^{S^{t-1}}_*)\  \text{for} \ t \geq 2.\]
    Here, $X^{S^{t-1}}$ denotes the mapping spectrum from the $t-1$-sphere $S^{t-1}$ to $X$. In Lemma \ref{obstructions zero} below we show that these groups that contain the obstructions are trivial. Therefore every map in 
    $\mathrm{Hom}_{\mathcal{R}\mhyphen alg} ({H\F_p}_*H\Z,X_*)$ lifts to a commutative $\Sp$--algebra map. 
    
    The canonical morphism ${H\F_p}_* H\Z \to {H\F_p}_*H\F_p$ is an injection and the image of this morphism is the free commutative algebra generated by $\zeta_i$ and $\overline{\tau}_i$ for $i \geq 1$ when $p$ is odd. For $p=2$, the image is generated by $\xi_1^2$ and $\zeta_i$ for $i \geq 1$. Since the above inclusion comes from a map of commutative $H\F_p$--algebras, it preserves Dyer--Lashof operations. This says that the Dyer--Lashof operations on ${H\F_p}_* H\Z$ are those of ${H\F_p}_* H\F_p$. 
    
    To construct the commutative $\Sp$--algebra map $\varphi_Y \co H\Z \to U(X)$ that defines $Y$ as desired, we start with any morphism $f$ in $E_2^{0,0} = \mathrm{Hom}_{\mathcal{R}\mhyphen alg} ({H\F_p}_*H\Z,X_*)$ that maps $\tau_1$ to a nonzero element in $H_{2p-1}X$ and use the lift of this map to a commutative $\Sp$--algebra map. Here, $\varphi_Y$ being a lift of $f$ means that the map 
    \[H\F_p \wedge H\Z \xrightarrow{\text{id}\wedge \varphi_Y} H\F_p \wedge U(X) \to U(X)\]
    induces $f$ in homotopy where the second map is given by the $H\F_p$--module structure map of $X$.
    
    At this point, we need to show that the $E_\infty$ $\Z$--DGAs corresponding to $X$ and $Y$ are not quasi--isomorphic; ie, $X$ and $Y$ are not weakly equivalent as commutative $H\Z$--algebras. 
    Assume that they are weakly equivalent over $H\Z$. We start with an $X$ that is cofibrant and fibrant as a commutative $H\F_p$--algebra. Therefore, $Y$ is also fibrant as a commutative $H\Z$--algebra because the underlying commutative $\Sp$--algebra of $Y$ is the underlying commutative $\Sp$--algebra of $X$. Our $X$ is also  cofibrant as a commutative $H\Z$--algebra since we use a cofibrant $H\F_p$ so that the initial map$\begin{tikzcd}[sep=small]H\Z \arrow[r,hookrightarrow]&H\F_p \arrow[r,hookrightarrow]&X \end{tikzcd}$is a composition of two cofibrations. Recall that cofibrations of commutative $H\F_p$--algebras are those of commutative $H\Z$--algebras.  Therefore, we have a weak equivalence of commutative $H\Z$--algebras$\begin{tikzcd}[sep=small] \psi  \co X \arrow[r,"\sim"]&Y.\end{tikzcd}$ That is, we have the following commuting diagram in commutative $\Sp$--algebras.
    
     \begin{equation} \label{diag zalg}
 \begin{tikzcd}
H\Z \arrow[rd] \arrow[dd,swap,"\varphi_Y"]
 &
 &
 \\
 &H\F_p \arrow[dr,"\varphi_X"]   
 \\
 U(X)=U(Y) & & U(X) \arrow[ll,"\simeq"] \arrow[ll,swap,"\psi"]
 \end{tikzcd}
 \end{equation}
 From the above diagram, by applying the $H\F_p$ homology functor we obtain the following diagram.
         \begin{equation} \label{diag zalg 2}
 \begin{tikzcd} 
 {H\F_p}_* H\Z \arrow[rd] \arrow[dd,swap,"{H\F_p}_* \varphi_Y"]
 &
 &
 \\
 &{H\F_p}_* H\F_p \arrow[dr,"{H\F_p}_* \varphi_X"]   
 \\
 {H\F_p}_* X \arrow[d]& & {H\F_p}_* X \arrow[ll,"\simeq"] \arrow[ll,swap,"{H\F_p}_* \psi"]
 \\
 X_*
 \end{tikzcd}
 \end{equation}
 Therefore, all the morphisms in this triangle preserve Dyer--Lashof operations. The bottom left arrow is induced by the $H\F_p$--module structure map of $X$. This is a morphism of commutative $H\F_p$--algebras, therefore the bottom left vertical arrow also preserves Dyer--Lashof operations operations. In conclusion, all the arrows in this diagram preserve Dyer--Lashof operations. 
    
    The composition of the vertical arrows on the left gives the map $f$ as chosen above. Therefore, $\overline{\tau}_1$ in ${H\F_p}_* H\Z$ is mapped to a non-zero element in $X_*$ by the composition of the vertical arrows. Because the triangle above commutes, if we travel $\overline{\tau}_1$ through the diagonal arrow to ${H\F_p}_* H\F_p$ and then to $X_*$, we see that $\overline{\tau}_1$ in ${H\F_p}_*  H\F_p$ must also be mapped to a non-zero element in $X_*$. Because  $\pi_1 X$ is trivial, $\tau_0$ in ${H\F_p}_* H\F_p$ is mapped to zero in $X_*$. However this, and the fact that $\overline{\tau}_1 = - \beta \mathrm{Q}^1 \tau_0$ imply that $\overline{\tau}_1$ in ${H\F_p}_* H\F_p$ is mapped to zero in $X_*$. This contradicts $f(\overline{\tau}_1) \neq 0$. Therefore, $Y$ and $X$ are not weakly equivalent as commutative $H\Z$--algebras. 
    
    For the case $H_{2p-2} X \neq 0$, we use an $f$ in 
    $\mathrm{Hom}_{\mathcal{R}\mhyphen alg} ({H\F_p}_*H\Z,X_*)$ that maps $\zeta_1$ to a non-zero element in $\pi_{2p-2}X$. Since $\mathrm{Q}^1 \tau_0 = -\zeta_1$, the rest of the argument follows similarly. 
    
    For $p=2$, we start with an $f$ in $\mathrm{Hom}_{\mathcal{R}\mhyphen alg} ({H\F_2}_*H\Z,X_*)$ that maps $\xi_1^2$ in ${H\F_2}_*H\Z$ to a non-zero element in $\pi_2 X$. Note that $\xi_1^2$ in ${H\F_2}_* H\Z$ is a free algebra generator and $\xi_1 \not \in {H\F_2}_* H\Z$. The arguments are similar but we do not use Dyer--Lashof operations in this case. Again considering Diagram \eqref{diag zalg 2}, $\xi_1^2$ in ${H\F_2}_* H\F_2$ is mapped to a non zero element in $\pi_2 X$ but since $\pi_1 X = 0$, $\xi_1$ is mapped to zero and this is a contradiction. Since we haven't used Dyer--Lashof operations, we may consider the underlying associative $H\Z$--algebras of $X$ and $Y$ and the above arguments still work, ie the associative $\Z$--DGAs corresponding to $X$ and $Y$ are non-trivially topologically equivalent.

    What is left to prove is the following lemma which says that the obstructions in the above setting for lifting $f$ to a map of commutative $\Sp$--algebras are zero. 
   \end{proof}

    \begin{lemma} \label{obstructions zero}
    In the setting of Theorem \ref{examples 2}, 
    \[\mathrm{Der}^t_{\mathcal{R}\mhyphen alg} ({H\F_p}_*H\Z,X^{S^{t-1}}_* )=0 \  \text{for} \ t \geq 2 \]
    \end{lemma}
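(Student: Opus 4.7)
The plan is to construct a free cofibrant simplicial resolution $F_\bullet \to {H\F_p}_*H\Z$ in the category of unstable commutative algebras over the Dyer--Lashof algebra whose cells in simplicial degree $\geq 1$ lie in topological degrees outside the range where $X^{S^{t-1}}_*$ is nonzero. The vanishing of $\mathrm{Der}^t$ for $t \geq 2$ then follows formally once such a resolution is in hand.

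Since $(X^{S^{t-1}}_*)_n = \pi_{n+t-1}(X) = 0$ for $n+t-1 > 2p^2 - 4$, the coefficient module is concentrated in degrees $n \leq 2p^2 - 3 - t \leq 2p^2 - 5$ for every $t \geq 2$. The polynomial and exterior generators of ${H\F_p}_*H\Z$ sit in degrees $2(p^i-1)$ and $2p^i-1$ for $i \geq 1$; for $p$ odd, the only ones at or below the threshold $2p^2-5$ are $\zeta_1$ and $\bar{\tau}_1$ in degrees $2p-2$ and $2p-1$, and for $p=2$ the relevant generators are $\xi_1^2$ (degree 2) and $\xi_2$ (degree 3).

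I would realise ${H\F_p}_*H\Z$ as a quotient of the free unstable DL-algebra $F$ generated by $\bar{\tau}_1$ for odd $p$ (with $\beta \bar{\tau}_1 = \zeta_1$) or by $\{\xi_1^2, \xi_2\}$ for $p=2$, then construct $F_\bullet$ so that the $0$-simplices realise $F$ and the higher simplices kill the failure of the canonical surjection $F \to {H\F_p}_*H\Z$ to be an isomorphism. Using Theorem \ref{BakerfreeDL} together with the Dyer--Lashof formulas from Section \ref{Dyer Lashof} and the Adem relations, a direct count shows that the first non-trivial relation in $F$ lives in topological degree $\geq 2p^2 + 2p - 3$ for odd $p$ and $\geq 5$ for $p=2$; in each case strictly above the coefficient vanishing threshold $2p^2 - 5$. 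All higher syzygies are obtained by applying further Dyer--Lashof operations, which only raise degrees, so the free generators of $F_n$ for $n \geq 1$ stay above this threshold. Consequently $\mathrm{Der}(F_n, X^{S^{t-1}}_*) = 0$ for $n \geq 1$, forcing $\mathrm{Der}^t({H\F_p}_*H\Z, X^{S^{t-1}}_*) = 0$ for $t \geq 2$.

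The main obstacle is the combinatorial verification that every relation and higher syzygy in the resolution sits above the degree threshold. For odd $p$ this amounts to tracking the expressions $\bar{\tau}_s = (-1)^s \mathrm{Q}^{(p^s-1)/(p-1)} \tau_0$ and their Adem rewrites in terms of admissible monomials on $\bar{\tau}_1$, checking that the first admissible monomial that is not a polynomial/exterior generator arises in degree $2p^2+2p-3$; for $p=2$ it requires a separate analysis of the Cartan-formula interactions between $\xi_1^2$ and $\xi_2$ under the Dyer--Lashof action, confirming that the lowest nontrivial relation is $\mathrm{Q}^3 \xi_1^2 = 0$ in degree $5$.
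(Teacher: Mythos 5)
Your underlying numerological observation is the same one the paper exploits: everything that prevents ${H\F_p}_*H\Z$ from being free as an unstable algebra over the Dyer--Lashof algebra happens in degrees strictly above the range where the coefficient module $X^{S^{t-1}}_*$ can be nonzero. But your execution has concrete gaps. First, for odd $p$ you take $F$ to be free on the single generator $\bar\tau_1$ and recover $\zeta_1$ as $\beta\bar\tau_1$. With the notion of free unstable algebra actually in play here (Theorem \ref{BakerfreeDL}), the algebra generators of $F_{\mathcal{R}}(\bar\tau_1)$ are the admissible $\mathrm{Q}^I\bar\tau_1$ satisfying the excess condition, and a bare Bockstein is not such an operation; so $\zeta_1$ (degree $2p-2$) is not hit by $F_{\mathcal{R}}(\bar\tau_1)\to{H\F_p}_*H\Z$ and your surjection already fails in degree $2p-2$. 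You need both generators $\zeta_1$ and $\bar\tau_1$. Second, with the two-generator free algebra your degree count for the first relation is wrong: $F_{\mathcal{R}}(\zeta_1,\bar\tau_1)$ has a free algebra generator $\beta\mathrm{Q}^{p}\zeta_1$ in degree $2p^2-3$, while ${H\F_p}_*H\Z$ has no algebra generator in that degree, so the first relation occurs in degree $2p^2-3$, not $2p^2+2p-3$. This still lies above the coefficient threshold $2p^2-5$, so the strategy is not destroyed, but the ``direct count'' you invoke is off.

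The more serious gap is the step you explicitly defer: ``all higher syzygies are obtained by applying further Dyer--Lashof operations, which only raise degrees.'' Syzygies in a simplicial resolution by free unstable DL-algebras do not arise by applying operations to relations, and the existence of a minimal or CW resolution whose cells in simplicial degrees $\geq 1$ all sit above a prescribed degree is precisely the statement your argument needs; asserting it is the gap. The paper's proof avoids constructing any such resolution: it takes the canonical triple resolution $F_{\mathcal{R}}^{\bullet+1}(-)$ of both ${H\F_p}_*H\Z$ and of the genuinely free object $F_{\mathcal{R}}(\zeta_1,\bar\tau_1)$, notes that because the two algebras agree up to degree $2p^2-4$ and $F_{\mathcal{R}}(M)$ depends up to degree $n$ only on $M$ up to degree $n$, the two resolutions agree up to degree $2p^2-4$ in every simplicial degree; since $\mathrm{Der}_{\mathcal{R}\mhyphen alg}(-,X^{S^{t-1}}_*)$ only sees that range, the two cochain complexes coincide, and the one for the free object has vanishing higher cohomology. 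To salvage your route you would have to prove the connectivity statement for minimal resolutions in this monadic category; otherwise, switch to the comparison argument, which requires no control over the cells of the resolution at all.
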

    \begin{proof}
    We describe the odd prime case, the proof is similar for $p=2$. Let  $F_{\mathcal{R}}(\zeta_1,\overline{\tau}_1)$ denote the free unstable algebra over the Dyer--Lashof algebra generated by two elements whose degrees are the degrees of the corresponding generators in the dual Steenrod algebra. The free unstable algebra over the Dyer--Lashof algebra is described in Theorem \ref{BakerfreeDL}. The lowest degree generator of $F_{\mathcal{R}}(\zeta_1,\overline{\tau}_1)$ after $\zeta_1$ and $\overline{\tau}_1$ is $\beta \mathrm{Q}^p \zeta_1$ with degree $2p^2-3$. Note that ${H\F_p}_*H\Z$ has no free algebra generators in this degree, showing that ${H\F_p}_*H\Z$ cannot be the free unstable algebra over the Dyer--Lashof algebra generated by $\zeta_1$ and $\overline{\tau}_1$. However, also note that ${H\F_p}_*H\Z$ agrees with $F_{\mathcal{R}}(\zeta_1,\overline{\tau}_1)$ up to degree $2p^2-4$. Therefore, the morphism $F_{\mathcal{R}}(\zeta_1,\overline{\tau}_1)\to {H\F_p}_* H\Z$ which preserves Dyer--Lashof operations maps $\zeta_1$ to $\zeta_1$ and $\overline{\tau}_1$ to $\overline{\tau}_1$ is an isomorphism up to degree $2p^2-4$. 
    
    Furthermore, considering the free simplicial resolution of these objects as unstable algebras over the Dyer--Lashof algebra, we get a morphism
    \[F_{\mathcal{R}}^{\bullet +1}(F_{\mathcal{R}}(\zeta_1,\overline{\tau}_1)) \to F_{\mathcal{R}}^{\bullet +1}({H\F_p}_* H\Z) \]
    of simplicial unstable algebras over the Dyer--Lashof algebra. Note that up to degree $n>0$, $F_\mathcal{R}(M)$ only depends on the part of the vector space $M$ up to degree $n$. Therefore, the morphism above is an isomorphism up to degree $2p^2-4$ at each simplicial degree.
    
    Let the functor 
    \[\mathrm{Der}_{\mathcal{R}\mhyphen alg} (-,X^{S^{t-1}}_*) = \mathrm{Map}_{\mathcal{R} \mhyphen alg \downarrow X_*}(-,X^{S^{t-1}}_*)\] denote the degree preserving derivations that preserve Dyer--Lashof operations.  Since $X^{S^{t-1}}_*$ is concentrated in degree $2p^2-4$ and below, this functor depends only on the input up to degree  $2p^2-4$. Therefore the morphism of simplicial sets above induces an isomorphism 
    \[\mathrm{Der}_{\mathcal{R}\mhyphen alg} (F_{\mathcal{R}}^{\bullet +1}({H\F_p}_* H\Z) ,X^{S^{t-1}}_*) \cong \mathrm{Der}_{\mathcal{R}\mhyphen alg} (F_{\mathcal{R}}^{\bullet +1}(F_{\mathcal{R}}(\zeta_1,\overline{\tau}_1)),X^{S^{t-1}}_*).  \] 
    In cohomology, this induces the isomorphism  \[\mathrm{Der}^t_{\mathcal{R}\mhyphen alg} ({H\F_p}_*H\Z,X^{S^{t-1}}_*) \cong \\ \mathrm{Der}^t_{\mathcal{R}\mhyphen alg} (F_{\mathcal{R}}(\zeta_1,\overline{\tau}_1),X^{S^{t-1}}_*) = 0 \mathrm{\ for}\  t>0.\]
    The last equality follows because Andr{\'e}--Quillen cohomology of a free object is trivial above degree zero.

    \end{proof}

\section{Proof of Theorem \ref{nonexistence 2}} \label{section nonexist}

To prove Theorem \ref{nonexistence 2}, we need to show that two co-connective commutative $H\Z/(m)$--algebras that are weakly equivalent as commutative $\Sp$--algebras are also weakly equivalent as commutative $H\Z/(m)$--algebras for any $m \in \Z$ with $m\neq \pm 1$.

Since the category of commutative $H\Z/(m)$--algebras is the category of commutative $\Sp$--algebras under $H\Z/(m)$, for our purpose, it is natural to consider the  homotopy class of commutative $\Sp$--algebra maps from $H\Z/(m)$ to a co-connective $H\Z/(m)$--algebra $X$. We omit the forgetful functor to commutative $\Sp$--algebras and denote this by $\pi_{0} \mathrm{map}_{\Sp \mhyphen cAlg}(H\Z/(m),X)$. We show in Proposition \ref{calculation 1} that there is a unique homotopy class of maps in $\mathrm{map}_{\Sp \mhyphen cAlg}(H\Z/(m),X)$. The proof of Theorem \ref{nonexistence 2} is based on this fact. 

\begin{proof}[Proof of Theorem \ref{nonexistence 2}]
Let $X$ and $Y$ be co-connective commutative \\ $H\Z/(m)$--algebras that are weakly equivalent as commutative $\Sp$--algebras. We assume $X$ and $Y$ are cofibrant and fibrant as commutative $H\Z/(m)$--algebras. Recall that cofibrations, fibrations and weak equivalences of commutative $H\Z/(m)$--algebras are precisely those of commutative $\Sp$--algebras. Therefore $X$ and $Y$ are also fibrant as commutative $\Sp$--algebras. Furthermore, they are also cofibrant commutative $\Sp$--algebras because the initial map $\Sp \to U(X)$ factors as a composition of two commutative $\Sp$--algebra cofibrations as$\begin{tikzcd}[sep=small]\Sp \arrow[r,hookrightarrow]& H\Z/(m) \arrow[r,hookrightarrow]& U(X),\end{tikzcd}$where $U$ denotes the forgetful functor to commutative $\Sp$--algebras. 

Since $X$ and $Y$ are weakly equivalent as commutative $\Sp$--algebras and they are cofibrant and fibrant as commutative $\Sp$--algebras, there is a weak equivalence of commutative $\Sp$--algebras$\begin{tikzcd}[sep=small] \psi  \co U(X) \arrow[r,"\sim"]&U(Y).\end{tikzcd}${L}et $\varphi_X  \co H\Z/(m) \to U(X)$ and $\varphi_Y \co H\Z/(m) \to U(Y)$ denote the commutative $\Sp$--algebra maps that are the $H\Z/(m)$ structre maps of $X$ and $Y$ respectively. Since $\psi$ is only a commutative $\Sp$--algebra map, it may not preserve the $H\Z/(m)$ structure, ie $\psi \circ \varphi_X$ is not necessarily equal to $\varphi_Y$. 

Let $Y^\prime$ be the commutative $H\Z/(m)$--algebra whose underlying commutative $\Sp$--algebra is $U(Y)$ and whose $H\Z/(m)$ structure map is $\psi \circ \varphi_X$. With this $H\Z/(m)$ structure of $Y^\prime$, $\psi$ becomes a weak equivalence of commutative $H\Z/(m)$--algebras from $X$ to $Y^\prime$. Therefore it is sufficient to show that $Y^\prime$ and $Y$ are weakly equivalent as $H\Z/(m)$--algebras. 

By Proposition \ref{calculation 1}, $\pi_{0} \mathrm{map}_{\Sp \mhyphen cAlg}(H\Z/(m),Y)= \{*\}$. Therefore, $\varphi_Y$ and $\psi \circ \varphi_X$, the structure maps of $Y$ and $Y^\prime$ respectively, are homotopic. A homotopy between $\varphi_Y$ and $\psi \circ \varphi_X$ is given by the following diagram.

 \begin{equation} \label{diag Fp theorem}
 \begin{tikzcd}
 & & U(Y)
 &
 &
 \\
 H\Z/(m) \arrow[urr,bend left,"\varphi_Y"] \arrow[drr,bend right,swap, "\psi \circ \varphi_X"] \arrow[r,"f"]    & U(Y)^I \arrow[ur,swap,"p_1"] \arrow[ur,"\simeq"] \arrow[dr,swap,"\simeq"] \arrow[dr,"p_2"]
 \\
 & & U(Y)= U(Y^{'})
 \end{tikzcd}
 \end{equation}
 Here, $U(Y)^I$ denotes a path object of $U(Y)$. This is a diagram in commutative $\Sp$--algebras. However, if we give $U(Y)^I$ a commutative $H\Z/(m)$--algebra structure using $f$ and call this commutative $H\Z/(m)$--algebra $Z$, $p_1$ becomes a weak equivalence of commutative $H\Z/(m)$--algebras from $Z$ to $Y$ and $p_2$ becomes a weak equivalence of commutative $H\Z/(m)$--algebras from $Z$ to $Y^\prime$. Therefore, $Y$ and $Y^\prime$ are weakly equivalent commutative $H\Z/(m)$--algebras and so are $Y$ and $X$.


\end{proof}

What is left to prove is the following proposition.

\begin{proposition} \label{calculation 1}
For a co-connective commutative $H\Z/(m)$--algebra $X$, the mapping space $\mathrm{map}_{\Sp \mhyphen cAlg}(H\Z/(m),X)$ is contractible. 
\end{proposition}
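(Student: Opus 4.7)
The plan is to compute $\mathrm{map}_{\Sp \mhyphen cAlg}(H\Z/(m), X)$ via an obstruction spectral sequence in the spirit of Theorem \ref{obstruction}, and show that coconnectivity of $X$ forces the spectral sequence to collapse to a single contractible component. First, by the adjunction between commutative $\Sp$--algebras and commutative $H\Z/(m)$--algebras --- exactly analogous to the one used in the proof of Theorem \ref{obstruction} but over $H\Z/(m)$ rather than $H\F_p$ --- I would rewrite
\[ \mathrm{map}_{\Sp \mhyphen cAlg}(H\Z/(m), X) \cong \mathrm{map}_{H\Z/(m) \mhyphen cAlg}(H\Z/(m) \wedge H\Z/(m), X), \]
so that the computation sits in the under-category where $X$ naturally lives, with the structure map as the obvious base point.

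Then I would invoke a version of the Goerss--Hopkins / Johnson--Noel obstruction spectral sequence set up relative to $H\Z/(m)$: its $E_2$--page is an André--Quillen cohomology of $\pi_*(H\Z/(m) \wedge H\Z/(m))$, viewed as an unstable algebra over a suitable Dyer--Lashof-type structure, with coefficients in $X^{S^t}_*$. The crucial input is that $X$ is coconnective, so $X^{S^t}_* = \pi_{*+t}X$ lives in degrees $\leq -t$, strictly negative whenever $t \geq 1$. Apart from the unit in degree zero, every algebra generator of $\pi_*(H\Z/(m) \wedge H\Z/(m))$ --- the analogues of $\tau_0$, $\xi_r$, $\zeta_r$ from the dual Steenrod algebra, together with any Bockstein-type generators detecting the non-prime part of $m$ --- sits in a strictly positive degree. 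Any structure-preserving morphism or derivation out of such a generator must land in the same degree of the target and is therefore forced to be zero; this annihilates $E_2^{s,t}$ for every $(s,t) \neq (0,0)$ and, at $(0,0)$, leaves only the single morphism induced by the base point. The spectral sequence then converges to a contractible space.

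The main obstacle is that Theorem \ref{obstruction} is stated only for commutative $H\F_p$--algebra targets --- the Johnson--Noel set-up relies on the free commutative $H\F_p$--algebra functor having a clean description as the free unstable algebra over the Dyer--Lashof algebra --- while $X$ is only an $H\Z/(m)$--algebra, with $\Z/(m)$ typically not a field. I would address this either by reducing via the splitting $H\Z/(m) \simeq \prod_i H\Z/(p_i^{a_i})$ of commutative $\Sp$--algebras to the case $m = p^k$ and then developing the corresponding obstruction theory over $H\Z/(p^k)$ along the same lines, or by attacking $X$ through its commutative $\Sp$--algebra Postnikov tower in the sense of Basterra \cite{basterra1999andre}, handling each Postnikov layer with the already-available obstruction theory over $H\F_p$ and $H\Z$. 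Either route reduces the contractibility claim to the same positive-degree-versus-non-positive-degree vanishing.
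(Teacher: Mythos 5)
Your core idea---that contractibility should come from a degree count pitting the connectivity of the source against the co-connectivity of $X$, plus the observation that unit-preservation pins down a single component---is exactly the engine of the paper's proof. But the machinery you propose to run it through does not exist, and this is a genuine gap. There is no Andr\'e--Quillen-type description of the $E_2$--page of the Goerss--Hopkins/Johnson--Noel spectral sequence over $H\Z/(p^k)$ for $k\geq 2$: the identification of $\pi_*\Pp_{H\F_p}(M)$ as a free unstable algebra over the Dyer--Lashof algebra relies essentially on $\F_p$ being a field, and ``developing the corresponding obstruction theory over $H\Z/(p^k)$ along the same lines'' is not a routine extension, since the homotopy of free commutative $H\Z/(p^k)$--algebras has no comparable algebraic model. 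The paper states this obstacle explicitly and deliberately sidesteps it. Your Chinese-remainder splitting only reduces to prime powers, not to primes, so it does not rescue this. Your fallback via Postnikov towers is also problematic: Basterra's theory concerns connective commutative ring spectra, whereas $X$ is co-connective, so its Postnikov tower in that sense is constant, and the filtration you would actually need (by connective covers of a co-connective object) does not consist of commutative ring spectra.

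What the paper does instead is run your degree count one page earlier, where no algebraic input is needed. It uses the Johnson--Noel $T$--algebra spectral sequence over $\Sp$, based at the structure map $H\Z/(m)\to U(X)$, with $E_1^{s,t}=\pi_t\,\mathrm{map}_{\Sp\mhyphen cAlg}(\Pp^{s+1}_{\Sp}(H\Z/(m)),X)\cong \pi_t\,\mathrm{map}_{\Sp\mhyphen mod}(\Pp^{s}_{\Sp}(H\Z/(m)),X)$. Since $\Pp^{s}_{\Sp}(H\Z/(m))$ is connective (via the homotopy orbit spectral sequence) and $X$ is co-connective, these groups vanish for $t>0$ by Proposition 1.4 in Chapter \rom{4} of \cite{elmendorf2007rings}. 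For the remaining row, a universal coefficient spectral sequence computation identifies $E_1^{0,0}\cong\mathrm{Hom}_{\Z\mhyphen mod}(\Z/(m),X_0)$, and the requirement that a class surviving to $E_2^{0,0}$ preserve the multiplicative identity, together with the fact that $\Z/(m)$ is generated by $1$, leaves only the base point. If you replace your $E_2$--page argument by this $E_1$--page argument, your proof closes; as written, the key step rests on a theory that is not available.
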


We use the obstruction spectral sequence to show that all the homotopy groups of this mapping space are trivial. However, since we work over a general $\mathbb{Z}/(m)$ where $m$ may not be a prime, we do not have a description of the $E_2$ page of the spectral sequence as in Theorem \ref{obstruction}. It turns out that it is sufficient to consider the $E_1$ page only. For this purpose, we use the spectral sequence in Theorem A of \cite{johnson2014lifting} which we can do due to Corollary 4.13 of \cite{johnson2014lifting}. Because $X$ is a commutative $H\Z/(m)$--algebra, there is a map of commutative $\Sp$--algebras $H\Z/(m) \to U(X)$ that serves as a base point for this spectral sequence. 

In this setting, the $E_1$ page of this spectral sequence is given by 
\[E_1^{s,t} = \pi_{t} \mathrm{map}_{\Sp \mhyphen cAlg}(\Pp^{s +1}_{\Sp}(H\Z/(m)),X),\]
where the homotopy groups are calculated at the given base point, see the proof of Theorem A of \cite{johnson2014lifting}. This spectral sequence abuts to
\[\pi_{t-s} \mathrm{map}_{\Sp \mhyphen cAlg}(H\Z/(m),X).\]
The free commutative $\Sp$--algebra functor induces a monad in the homotopy category of $\Sp$--modules and let $h\Pp_{\Sp}$ denote this monad. We use that the $E_2^{0,0}$ term of this spectral sequence is given by 
\[E_2^{0,0}= \mathrm{Hom}_{h\Pp_{\Sp} \mhyphen alg}(H\Z/(m),X)\]
which denotes morphisms of algebras over $h\Pp_{\Sp}$ in $\Sp$--modules. Note that $E_2^{0,0}$ term of this spectral sequence is just a set.  

\begin{proof}
We will show that in the above spectral sequence, $E_1^{s,t} = 0$ for $t>0$ and $E_2^{0,0} = pt$. This is sufficient to show that the homotopy groups of the mapping space are trivial. 

We start by showing that $E_2^{0,0} = 0$. We have the following isomorphisms for $E_1^{0,0}$ that we explain below.
\begin{equation*} \label{eq_1}
\begin{split}
E_1^{0,0} & = \pi_{0} \mathrm{map}_{\Sp \mhyphen cAlg}(\Pp_{\Sp}(H\Z/(m)),X) \\
 & \cong \pi_{0} \mathrm{map}_{\Sp \mhyphen mod}(H\Z/(m),X) \\
 & \cong \mathrm{Hom}_{
 \Z \mhyphen mod}(H\Z_* H\Z/(m),X_*) \\
 & \cong \mathrm{Hom}_{\Z \mhyphen mod}(\Z/(m),X_0) \\
\end{split}
\end{equation*}
 The first isomorphism follows by adjunction. For the second isomorphism, we use the universal coefficient spectral sequence of Theorem 4.5 in Chapter \rom{4} of \cite{elmendorf2007rings} with respect to the homology theory $H\Z_*$. This works because $X$ is an $H\Z$--module by forgetting the $H\Z/(m)$--module structure through the map $H\Z \to H\Z/(m)$. For this spectral sequence, we have
 \[E_2^{p,q} = \mathrm{Ext}_{\Z}^{p,-q}(H\Z_*H\Z/(m),X_*)\]
 \[d^r_{p,q} \co E_2^{p,q} \to E_2^{p+r,q-r+1}\]
 where $p$ denotes the cohomological degree and $-q$ denotes the internal degree, particularly it denotes the Ext groups calculated by considering the maps that increase the degree by $-q$. This spectral sequence abuts to 
 \[\pi_{-(p+q)} \mathrm{map}_{\Sp \mhyphen mod}(H\Z/(m),X).\]

 Since $\Z$ has global dimension $1$, $E_2^{p,q}=0$ for $p>1$ and therefore all differentials are zero and the only terms that contribute to $\pi_0$ are $E_2^{0,0}$ and $E_2^{1,-1}$. Note that by Proposition 1.2 in Chapter \rom{4} of \cite{elmendorf2007rings}, $H\Z_*H\Z/(m)$ is connective. Therefore there is a projective resolution of $H\Z_*H\Z/(m)$ that is connective in each resolution degree. From such a resolution, there are no maps of degree $1$ to the co-connective object $X_*$. Hence, $E_2^{1,-1}=0$. This proves the second isomorphism. By the Tor spectral sequence of Theorem 4.1 in Chapter \rom{4} of \cite{elmendorf2007rings}, $H\Z_0H\Z/(m) \cong \Z/(m)$. The third isomorphism follows from this because $X$ is co-connective.

By the description of the $E_2^{0,0}$ above, a morphism in $E_1^{0,0}$ that lifts to $E_2^{0,0}$ should preserve the multiplicative identity. Since $\Z/(m)$ is generated as an abelian group by $1$, there is only one such morphism in $E_1^{0,0}\cong \mathrm{Hom}_{\Z \mhyphen mod}(\Z/(m),X_0)$. Furthermore, we know that this morphism lifts to the $E_2^{0,0}$ term because it is represented by a morphism which is an actual commutative $\Sp$--algebra map $H\Z/(m) \to X$ which is our base point. In conclusion, $E_2^{0,0} = pt$.  

Now we will show that $E_1^{s,t}= 0$ for $t>0$. Again by adjunction, we have 
\begin{equation*} \label{eq1}
\begin{split}
E_1^{s,t} &= \pi_{t} \mathrm{map}_{\Sp \mhyphen cAlg}(\Pp_{\Sp}^{s+1}(H\Z/(m)),X) \\
&\cong \pi_{t} \mathrm{map}_{\Sp \mhyphen mod}(\Pp_{\Sp}^{s}(H\Z/(m)),X).\\
\end{split}
\end{equation*}

There is a spectral sequence for calculating homotopy groups of the homotopy orbit of a spectrum with an action of a group $G$. 
\[ H_p(G,\pi_qY) \Rightarrow \pi_{p+q}Y_{hG}\]
Using this spectral sequence it is clear that the homotopy orbit spectrum of a connective spectrum is connective. Therefore $\Pp_{\Sp}(Y)$ is connective when $Y$ is because $\Pp_{\Sp}(Y)$ is wedges of homotopy orbits of $Y^{\wedge n}$ with respect to the action of the symmetric group. This means that $\Pp_{\Sp}^{s}(H\Z/(m))$ is connective. Since $X$ is co-connective by hypothesis, we have   
\[\pi_{t} \mathrm{map}_{\Sp \mhyphen mod}(\Pp_{\Sp}^{s}(H\Z/(m)),X) \cong 0 \ \text{for} \ t>0\]
by Proposition 1.4 in Chapter \rom{4} of \cite{elmendorf2007rings}.
 
\end{proof}    

\section{Proof of Theorem \ref{nonexistence 3}} \label{section dl}

In this section, we prove the following theorem.
\begin{theorem}
\label{thm DL}
Let $X$ and $Y$ be $H_\infty$ $H\F_p$--algebras with trivial first homotopy groups. If $X$ and $Y$ are equivalent as $H_\infty$ $\Sp$--algebras, then they are equivalent as $H_\infty$ $H\F_p$--algebras.
\end{theorem}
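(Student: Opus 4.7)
The plan has three phases: reduce to a uniqueness claim for $H_\infty$ $H\F_p$-algebra structures on a fixed $H_\infty$ $\Sp$-algebra, reformulate via the canonical Dyer--Lashof action on $H\F_p$-homology, and discharge the resulting obstruction using $\pi_1 Y = 0$ together with the fact that the dual Steenrod algebra is generated in degree one over the Dyer--Lashof algebra.

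Given the $H_\infty$ $\Sp$-algebra equivalence $\psi \co X \to Y$ with structure maps $\varphi_X \co H\F_p \to X$ and $\varphi_Y \co H\F_p \to Y$, form the $H_\infty$ $H\F_p$-algebra $Y'$ whose underlying $H_\infty$ $\Sp$-algebra is $Y$ and whose structure map is $\psi \circ \varphi_X$. By construction $\psi$ is an $H_\infty$ $H\F_p$-algebra equivalence $X \simeq Y'$, so it remains to show $Y \simeq Y'$ as $H_\infty$ $H\F_p$-algebras. Since the homotopy category of $H_\infty$ $H\F_p$-algebras is equivalent to the category of unstable algebras over the Dyer--Lashof algebra (Section \ref{Dyer Lashof}), this is the same as showing that the two Dyer--Lashof structures on the common graded ring $\pi_* Y$ coming from $\varphi_Y$ and from $\psi \circ \varphi_X$ actually agree, rendering the identity an $H_\infty$ $H\F_p$-equivalence.

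For any structure map $f \co H\F_p \to Y$, the Dyer--Lashof operations on $\pi_* Y$ factor through the canonical DL action on $(H\F_p)_* Y$ (which depends only on the underlying $H_\infty$ $\Sp$-algebra), via the $H\F_p$-module retraction $\mu_{f,*} \co (H\F_p)_* Y \to \pi_* Y$, whose further restriction along the canonical Hurewicz map $\eta_{Y,*} \co \pi_* Y \to (H\F_p)_* Y$ is the identity. The map $f$ also induces a DL-preserving ring map $f_* \co \mathcal{A}_* \to (H\F_p)_* Y$. Because $\mathcal{A}_*$ is generated as an unstable algebra over the Dyer--Lashof algebra by $\tau_0 \in \mathcal{A}_1$ (or $\xi_1 \in \mathcal{A}_1$ if $p = 2$), the map $f_*$ is determined by a single degree one class. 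Any difference between two structure maps $f_1$ and $f_2$ is therefore detected in degree one, and the resulting contribution to the DL operations on $\pi_* Y$ lives in a subquotient of $\pi_1 Y$. The hypothesis $\pi_1 Y = 0$ forces this contribution to vanish, so the two DL structures on $\pi_* Y$ coincide.

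The main obstacle is turning this outline into a rigorous proof. One must show precisely that the DL operations on $\pi_* Y$ are determined by the $H_\infty$ $\Sp$-algebra structure of $Y$ together with the degree one piece of $f_*$, and that the triviality of $\pi_1 Y$ kills the ambiguity in the latter. This likely proceeds via an obstruction-theoretic argument in the spirit of the proof of Theorem \ref{nonexistence 2}, probably using a version of the $T$-algebra spectral sequence of \cite{johnson2014lifting} adapted to $H_\infty$ $\Sp$-algebra maps $H\F_p \to Y$; the relevant obstruction groups should be Andr\'e--Quillen cohomology groups of $\mathcal{A}_*$ with coefficients in a spectrum built from $\pi_1 Y$. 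Example \ref{example Fp} shows that this obstruction is genuinely non-trivial when $\pi_1 Y \neq 0$, so the hypothesis is essential.
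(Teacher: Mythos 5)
Your reduction (transport the $H_\infty$ $H\F_p$--structure of $X$ across $\psi$ to get a second structure $\psi\circ\varphi_X$ on $Y$, then compare the two resulting Dyer--Lashof actions on $\pi_*Y$) is sound and equivalent to what the paper does, and you have correctly identified the two essential ingredients: the retraction $\mu_{f,*}\circ\eta_{Y,*}=\mathrm{id}$ and the generation of $\mathcal{A}_*$ by $\tau_0$ over the Dyer--Lashof algebra. But the central step --- that the two DL structures on $\pi_*Y$ coincide --- is exactly the point you leave open, and the two things you say about it do not close it. First, the assertion that ``any difference between $f_1$ and $f_2$ is detected in degree one'' and that its contribution ``lives in a subquotient of $\pi_1Y$'' conflates two different groups: when $\pi_1Y=0$ one still has $({H\F_p})_1Y\cong\mathcal{A}_1\otimes\pi_0Y\neq 0$, so $f_{1,*}(\tau_0)$ and $f_{2,*}(\tau_0)$ can genuinely differ in $({H\F_p})_*Y$ (they do in the paper's examples); what vanishes is not their difference but the image of the positive-degree part of $\mathcal{A}_*$ under the composite down to $\pi_*Y$ through the \emph{other} structure's multiplication $\mu_{f',*}$, because that composite sends $\tau_0$ into $\pi_1Y=0$ and preserves $\mathrm{Q}^s$ and products. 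Second, the route you propose for making this rigorous --- a $T$--algebra/Goerss--Hopkins style obstruction spectral sequence for maps $H\F_p\to Y$ --- is the wrong tool here: that machinery computes mapping spaces of strict ($E_\infty$) algebra maps, whereas the statement is about $H_\infty$ structures, which live only in the homotopy category and, as Lawson's examples show, need not lift to $E_\infty$ structures. An $E_\infty$ obstruction argument would at best prove a different (and here inapplicable) statement.

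The paper closes the gap with a direct diagram chase rather than obstruction theory. One writes the two-storey square with vertical maps $\eta_X,\eta_Y$ and $\mu_X,\mu_Y$ and middle horizontal map $\psi=(\mathrm{id}\wedge\varphi)_*$ on $\mathcal{A}_*\otimes_{\F_p}X_*\to\mathcal{A}_*\otimes_{\F_p}Y_*$, which preserves DL operations since it is induced by an $H\F_p$--homology map. The bottom square is shown to commute in two cases: for $a\otimes x$ with $\lvert a\rvert>0$ one has $\mu_{X*}(a\otimes x)=0$, and $\mu_{Y*}\circ\psi(a\otimes x)=(\mu_{Y*}\circ\psi(a\otimes1))\cdot(\mu_{Y*}\circ\psi(1\otimes x))=0$ because $\mu_{Y*}\circ\psi(\tau_0\otimes1)\in\pi_1Y=0$ and $\mu_{Y*}\circ\psi$ is a ring map preserving DL operations while $\mathcal{A}_*\otimes\{1\}$ carries the DL operations of $\mathcal{A}_*$ and is generated by $\tau_0\otimes 1$; for $1\otimes x$ the top square gives $\psi(1\otimes x)=1\otimes\varphi_*(x)$, whence $\mu_{Y*}\circ\psi(1\otimes x)=\varphi_*(x)=\varphi_*\circ\mu_{X*}(1\otimes x)$. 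Then $\varphi_*(\mathrm{Q}^sx)=\mu_{Y*}\psi(\mathrm{Q}^s(1\otimes x))=\mathrm{Q}^s\mu_{Y*}\psi(1\otimes x)=\mathrm{Q}^s\varphi_*(x)$, and one concludes via the fact that the $H_\infty$ $H\F_p$--equivalence type is determined by the homotopy ring as an algebra over the Dyer--Lashof algebra. You should replace your appeal to obstruction theory with this explicit computation (or an equivalent one).
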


This is a slightly stronger result than Theorem \ref{nonexistence 3}. If two $E_\infty$ DGAs are $E_\infty$ topologically equivalent, then the corresponding ring spectra are commutative $\Sp$--algebra equivalent and therefore $H_\infty$ $\Sp$--algebra equivalent. Therefore, Theorem \ref{nonexistence 3} is a corollary of Theorem \ref{thm DL}. 

\begin{remark}
As mentioned in Remark \ref{remark Tyler}, one of the intermediate results of \cite{lawson2015note}, Proposition 5, states that Theorem \ref{thm DL} is still true for $H_\infty$ $H\F_p$--algebras with non-trivial first homology and this contradicts Example \ref{example Fp}. The proof of Proposition 5 of \cite{lawson2015note} ends by stating that the canonical map 
\begin{equation}
[\Pp_{H\F_p}(M),M]_{H\F_p\mhyphen mod} \to [\Pp_{\Sp}(M),M]_{\Sp \mhyphen mod}
\end{equation}
between homotopy classes of maps in $H\F_p$--modules to $\Sp$--modules is injective where $M$ is an $H\F_p$--module. This says that $H_\infty$ $H\F_p$--algebra structure maps forget injectively to $H_\infty$ $\Sp$--algebra structure maps. However, this does not imply the desired result since one needs to consider $H_\infty$ $H\F_p$--equivalences and $H_\infty$ $\Sp$--equivalences between different $H_\infty$ $H\F_p$--algebra and $H_\infty$ $\Sp$--algebra structures on $M$. 
  
\end{remark}

In the proof of Theorem \ref{thm DL}, we use the following facts about $H_\infty$ algebras which can be derived using the results of \cite{brunerh}. In the items below, $X$ denotes an $H_\infty$ $H\F_p$--algebra. 

\begin{enumerate}
\item A morphism of $H_\infty$ $\Sp$--algebras induces a map of rings in the homotopy groups.

\item The structure map $\mu_X \co  H\F_p \wedge X \to X$ induced by the $H\F_p$--module structure on $X$ is a map of $H_\infty$ $H\F_p$--algebras. Therefore this map preserves Dyer--Lashof operations on the homotopy ring.

\item There is an equivalence $H\F_p\wedge X \cong (H\F_p \wedge H\F_p)\wedge_{H\F_p} X$. Using this, we obtain the identification $\pi_* (H\F_p \wedge X) \cong \mathcal{A}_* \otimes_{\F_p} X_*$. Note that the Dyer--Lashof operations on $\mathcal{A}_* \otimes_{\F_p} X_*$ are not those of the tensor product because the $H\F_p$ structure on $H\F_p \wedge X$ is given by multiplication with the $H\F_p$ factor on the left. With this identification, ${\mu_X}_*$ is given by ${\mu_X}_*(a\otimes x) = ax$ if $a \in \mathcal{A}_0= \F_p$ and ${\mu_X}_*(a\otimes x) = 0$ if $a \in \mathcal{A}_i$ for $i>0$.
 
\item The unit map $\eta_X \co \Sp \wedge X \to H\F_p\wedge X$ satisfies $\mu_X\circ\eta_X= id$. However, $\eta_X$ is only a map of $H_\infty$ $\Sp$--algebras and it may not preserve Dyer--Lashof operations in the homotopy ring. By the identification of $\pi_*(H\F_p \wedge X)$ above, the morphism induced by $\eta_X$ on the homotopy ring is given by ${\eta_X}_* (x) = 1\otimes x$.
\end{enumerate}


\begin{proof}[Proof of Theorem \ref{thm DL}]
    Let $\varphi \co X\to Y$ be an equivalence of $H_\infty$ $\Sp$--algebras. This implies that $\varphi_*$ is an isomoprhism of rings. We will show that $\varphi$ induces an equivalence of $H_\infty$ $H\F_p$--algebras by showing that $\varphi_*$ preserves Dyer--Lashof operations. This is sufficient because an $H_\infty$ $H\F_p$--algebra equivalence type is determined by the isomorphism class of its homotopy ring as an algebra over the  Dyer--Lashof algebra, see Theorem 4 in \cite{lawson2015note} and the discussion after it.
    
    We have the following diagram.
\begin{equation} \label{diag 1}
 \begin{tikzcd}
 X \arrow [r,"\varphi"]
 \arrow[d,"\eta_X"]
 &
 Y
 \arrow[d,"\eta_Y"]
 \
 \\
 \
 H\F_p \wedge X
 \arrow[r,"id \wedge \varphi"]
 \arrow[d,"\mu_X"]
 &H\F_p \wedge Y
 \arrow[d,"\mu_Y"]
 \
 \\
 \
 X
 &Y
 \end{tikzcd}
\end{equation}
Applying the homotopy functor to this diagram produces the following. 
\begin{equation} \label{diag 2}
 \begin{tikzcd}
 X_* \arrow [r,"\varphi_*"]
 \arrow[d,"{\eta_X}_*"]
 &
 Y_*
 \arrow[d,"{\eta_Y}_*"]
 \
 \\
 \
 \mathcal{A}_* \otimes_{\F_p} X_*
 \arrow[r,"\psi"]
 \arrow[d,"{\mu_X}_*"]
 &\mathcal{A}_* \otimes_{\F_p} Y_*
 \arrow[d,"{\mu_Y}_*"]
 \
 \\
 \
 X_* \arrow[r,"\varphi_*"]
 &Y_*
 \end{tikzcd}
\end{equation}

The middle horizontal morphism $\psi$ is the morphism induced on the homotopy groups by $id \wedge \varphi$. Because we do not assume $\varphi$ to be a map of $H_\infty$ $H\F_p$--algebras, $\psi$ may not be induced by two morphisms on the tensor factors. However, $\psi$ preserves Dyer--Lashof operations because it is the morphism in $H\F_p$ homology induced by $\varphi$. 

The top square in Diagram \eqref{diag 2} commutes because it is induced by the commutative square in Diagram \eqref{diag 1}. Although the bottom square is not induced by a commuting square, we show that it also commutes. For this purpose we need to know more about the Dyer--Lashof operations on $\mathcal{A}_* \otimes_{\F_p} X_*$. 

We have the following map
\[ H\F_p \wedge H\F_p \cong (H\F_p \wedge H\F_p) \wedge_{H\F_p} H\F_p \to (H\F_p \wedge H\F_p) \wedge_{H\F_p} X \cong H\F_p \wedge X\]
induced by the map of $H_\infty$ $H\F_p$--algebras $H\F_p \to X$. This is a map of $H_\infty$ $H\F_p$--algebras when the $H\F_p$ multiplication on $H\F_p \wedge H\F_p$ is given by that of the $H\F_p$ factor on the left. Therefore the morphism $\mathcal{A}_* \to \mathcal{A}_* \otimes_{\F_p} X_*$ induced on the homotopy groups preserves Dyer--Lashof operations. This says that on $\mathcal{A}_*\otimes_{\F_p} \{1\} \subseteq \mathcal{A}_*\otimes_{\F_p} X_*$, Dyer--Lashof operations are given by the ones on the dual Steenrod algebra ie $\mathrm{Q}^s (a \otimes 1)= (\mathrm{Q}^s a) \otimes 1$.

Now we show that the bottom square in Diagram \eqref{diag 2} commutes. We first show this for elements of the form $a \otimes x \in \mathcal{A}_*\otimes_{\F_p} X_*$ with $\lvert a \rvert >0$. By the description of ${\mu_X}_*$ in the paragraph before this proof, we have ${\mu_X}_*(a\otimes x)= 0$ and therefore $\varphi_* \circ {\mu_X}_* (a \otimes x)= 0$. Therefore our goal is to show that ${\mu_Y}_* \circ \psi (a \otimes x) = 0$. Let $\tau_0$ denote the degree $1$ element in $\mathcal{A}_*$ that generates it as an algebra over the Dyer--Lashof algebra (this element is called $\xi_1$ for $p=2$). Because $\pi_1(Y) = 0$, ${\mu_Y}_* \circ \psi (\tau_0 \otimes 1) = 0$. Since ${\mu_Y}_* \circ \psi$ is a morphism of rings that preserves Dyer--Lashof operations, 
${\mu_Y}_* \circ \psi (a \otimes 1) = 0$ whenever $\lvert a \rvert>0$. Therefore when $\lvert a \rvert >0$,
\[{\mu_Y}_* \circ \psi (a \otimes x) = ({\mu_Y}_* \circ \psi (a \otimes 1)) \cdot ({\mu_Y}_* \circ \psi (1 \otimes x)) = 0 .\]

After this, we just need to show that the bottom square in Diagram \eqref{diag 2} commutes for elements in $\mathcal{A}_* \otimes_{\F_p} X_*$ of the form $a \otimes x$ where $a \in \mathcal{A}_0 = \F_p$. Clearly, it is sufficient to work only with the elements of the form $1 \otimes x$. By the description of ${\mu_X}_*$ in the paragraph before this proof, we have $\varphi_* \circ {\mu_X}_*(1\otimes x ) = \varphi_*(x)$. Therefore, our goal is to show that ${\mu_Y}_*\circ \psi(1 \otimes x) = \varphi_*(x)$.
Because the top square in Diagram \eqref{diag 2} commutes, we deduce that 
\begin{equation} \label{eq 1}
 \psi (1\otimes x) = \psi({\eta_X}_*(x)) = {\eta_Y}_*(\varphi_*(x)) = 1\otimes \varphi_*(x).
\end{equation}
Using this, we obtain what we wanted to show: \[{\mu_Y}_* \circ \psi (1 \otimes x)= {\mu_Y}_* (1 \otimes \varphi_*(x)) = \varphi_*(x).\]
At this point, we know that the bottom square in Diagram \eqref{diag 2} commutes and we are ready to show that $\varphi_*$ preserves Dyer--Lashof operations. Given $x \in X_*$, we have 
\[ \varphi_*(\mathrm{Q}^s x) = \varphi_*(\mathrm{Q}^s {\mu_X}_*(1 \otimes x)) = \varphi_* \circ {\mu_X}_*(\mathrm{Q}^s (1 \otimes x)) = {\mu_Y}_* \circ \psi (\mathrm{Q}^s (1 \otimes x)).\]
Therefore we need to show that ${\mu_Y}_* \circ \psi (\mathrm{Q}^s (1 \otimes x)) = \mathrm{Q}^s \varphi_*(x)$. This is given by the following chain of equalities
\[{\mu_Y}_* \circ \psi (\mathrm{Q}^s (1 \otimes x)) = \mathrm{Q}^s{\mu_Y}_* (\psi (1 \otimes x)) = \mathrm{Q}^s{\mu_Y}_*(1\otimes \varphi_*(x)) = \mathrm{Q}^s\varphi_*(x).\]
The first equality follows because both $\psi$ and ${\mu_Y}_*$ preserve Dyer--Lashof operations and the second equiality follows by Equation \eqref{eq 1}. Since $\varphi_*$ preserves Dyer--Lashof operations, it induces an isomorphism between $X_*$ and $Y_*$ as algebras over the Dyer--Lashof algebra and therefore $X$ and $Y$ are equivalent as $H_\infty$ $H\F_p$--algebras.

\end{proof}
\appendix
\section{Previous examples} 
\label{sec previous}
  In Section \ref{sec previous examples}, we discuss the first class of examples of non-trivial topological equivalences provided in \cite{dugger2007topological}. These examples rely on the classification of Postnikov extensions of ring spectra developed in \cite{dugger2006postnikov}. In this section, we point out a mistake in the construction of these examples and provide a correction which recovers the classification of quasi-isomorphism classes of $\Z$--DGAs with homology ring $\Lambda_{\F_p}(x_n)$ for $n>0$, ie the exterior algebra with a single generator in a positive degree.

Let $R$ be a connective commutative ring spectrum. We first explain the classification of Postnikov extensions of connective (trivial negative homotopy groups) $R$--algebras developed in \cite{dugger2006postnikov}. For a connective $R$--algebra $X$, the $n$th Postnikov section of $X$ is a map of $R$--algebras $X\to P_nX$ which induces an isomorphism on $\pi_i(X) \to \pi_i (P_nX)$ for $i\leq n$ and with $\pi_i (P_nX) = 0$ for $i>n$. Given a connective $R$--algebra $Y$ with $P_{n-1}Y \simeq Y$ and a $\pi_0(Y)$--bimodule $M$, a Postnikov extension of $Y$ of type $(M,n)$ is a map of $R$--algebras $X \to Y$ which satisfies the following properties:
\begin{enumerate}
    \item   $\pi_i (X) = 0$ for $i>n$
    \item $\pi_i(X) \to \pi_i(Y)$ is an isomorphism for $i < n$
    \item There is an isomorphism of $\pi_0(X)$--bimodules $\pi_n(X) \cong M$ where $\pi_0(X)$--bimodule structure structure of $M$ is obtained by the map $\pi_0(X) \to \pi_0(Y)$.
\end{enumerate}

The moduli space of Postnikov extensions of $Y$ of type $(M,n)$, denoted by $\mathcal{M}_R(Y+M,n)$, is defined to be the category whose objects are Postnikov extensions of $Y$ of type $(M,n)$, and a morphism between two extensions $X_1 \to Y$ to $X_2 \to Y$ is a weak equivalence$\begin{tikzcd}[sep=small] X_1 \arrow[r,"\sim"]&X_2\end{tikzcd}$for which the following triangle commutes. 
 \begin{equation*}
 \begin{tikzcd}
 X_1 \arrow [rr,"\simeq"]
 \arrow[dr,swap,]
 &
 & X_2
 \arrow[dl,swap,]
 \\
 &
 Y 
 \end{tikzcd}
 \end{equation*}

The main result of \cite{dugger2006postnikov} is a classification of these Postnikov extensions in terms of topological Hochschild cohomology.
\begin{theorem} \label{DS 1}
\cite{dugger2006postnikov} Assuming $X$ is cofibrant as an $R$--module, the following is a bijection.
\begin{equation*}
\pi_0 \mathcal{M}_R(X+M,n) \cong \mathrm{THH}_R^{n+2} (X,M) / \mathrm{Aut}(M)
\end{equation*}
\end{theorem}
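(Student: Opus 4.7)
The plan is to identify the moduli space of Postnikov extensions of $X$ with a moduli space of derived square-zero extensions, and then classify the latter via a derivation theory whose obstruction groups are topological Hochschild cohomology. The numerical shift $n+2$ should decompose as $n$ (the degree in which the bimodule $M$ is placed) plus $2$ (mirroring the classical fact that ordinary square-zero extensions of a ring $A$ by a bimodule $M$ are classified by $\mathrm{HH}^2(A,M)$).

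First I would analyze the fiber $F$ of a Postnikov extension $\tilde{X} \to X$ of type $(M,n)$. Conditions (1)--(3) together with the long exact sequence in homotopy force $\pi_i(F) = 0$ for $i \neq n$ and $\pi_n(F) \cong M$, so $F \simeq \Sigma^n HM$ as an $R$--module, with the $\tilde{X}$--bimodule structure factoring through the augmentation to $\pi_0(\tilde{X}) \cong \pi_0(X)$. Because $F \wedge_{\tilde{X}} F$ is then $(2n-1)$--connected while $F$ itself lives in degree $n$, the residual multiplication $F \wedge_{\tilde{X}} F \to F$ is null, so $\tilde{X} \to X$ is genuinely a derived square-zero extension of $X$ by $F$. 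I would then invoke the classification of such extensions of associative $R$--algebras by derivations: there is a homotopy pullback realizing $\tilde{X}$ as the pullback of the trivial split extension $X \vee \Sigma F \to X$ along the zero section and along a classifying derivation $d \co X \to \Sigma F$. The cofibrancy hypothesis on $X$ ensures this derivation theory is homotopically well-behaved. Homotopy classes of $R$--algebra derivations from $X$ into a bimodule $N$ form the relevant topological Hochschild cohomology group; applied to $N = \Sigma F = \Sigma^{n+1}HM$, this yields $\mathrm{THH}_R^{n+2}(X,M)$ once the double shift is tracked.

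Finally I would address the quotient by $\mathrm{Aut}(M)$. The identification $\pi_n(\tilde{X}) \cong M$ is part of the data of an extension, while morphisms in $\mathcal{M}_R(X+M,n)$ are weak equivalences over $X$ with no constraint on the induced map on $\pi_n$. Two extensions whose classifying derivations differ by the action of an automorphism of $M$ are therefore equivalent (though not canonically so), and conversely any self-equivalence of a fixed extension over $X$ restricts to an automorphism of $M$ on $\pi_n$. The main obstacle will be proving this matching of orbits at the level of $\pi_0$ of the nerve of the category of extensions, rather than only on the naive set of isomorphism classes: one needs to realize $\mathcal{M}_R(X+M,n)$ as the homotopy fiber of a map between moduli spaces of $R$--algebras and identify its $\pi_0$ with the set of $\mathrm{Aut}(M)$--orbits on $\mathrm{THH}_R^{n+2}(X,M)$. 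This fiber-sequence analysis, combined with the obstruction theory for moduli of ring spectra, is the content of \cite{dugger2006postnikov}.
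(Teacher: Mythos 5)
The paper does not actually prove this statement: Theorem \ref{DS 1} is quoted verbatim from Dugger and Shipley \cite{dugger2006postnikov}, so there is no internal proof to compare your argument against. What you have written is a reconstruction of the strategy of \cite{dugger2006postnikov} itself, and as an outline it is essentially faithful: the fiber of a Postnikov extension of type $(M,n)$ is $\Sigma^n HM$, the extension is classified by a derivation $X \to X \vee \Sigma^{n+1}HM$ over $X$ whose homotopy class lives in $\mathrm{THH}_R^{n+2}(X,M)$, and the $\mathrm{Aut}(M)$--quotient arises because morphisms in $\mathcal{M}_R(X+M,n)$ are only required to commute over $X$, not to fix the chosen identification $\pi_n \cong M$. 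Two places where your sketch is thinner than the actual argument. First, nullity of the multiplication $F \wedge_{\tilde X} F \to F$ in the homotopy category (which in any case needs $n \geq 1$) does not by itself produce a structured square-zero datum; \cite{dugger2006postnikov} instead construct the classifying derivation directly, essentially from the pushout $X \amalg_{\tilde X} X$, and use the connectivity estimates you mention to identify its homotopy type. Second, the matching of $\pi_0$ of the nerve of the category of extensions with the orbit set $\mathrm{THH}_R^{n+2}(X,M)/\mathrm{Aut}(M)$ --- which you correctly flag as the main obstacle --- is exactly where the real work lies, and your sketch defers it back to the reference rather than supplying it. Since the present paper likewise treats the theorem as a black box (its own contribution in the appendix is the separate Proposition \ref{classification 1}, showing that equivalence of Postnikov extensions coincides with equivalence of the underlying $H\Z$--algebras), that deferral is consistent with how the result is used here.
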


This result is used in \cite{dugger2007topological} to classify weak equivalence classes of $\Z$--DGAs with homology ring  $\Lambda_{\F_p} (x_n)$, the exterior algebra over $\F_p$ with a single generator in degree $n$, for $n>0$. Any such DGA is a Postnikov extension of $\F_p$ of type $(\F_p,n)$. By the Quillen equivalence of  $\Z$--DGAs and $H\Z$--algebras, this is the same as classifying $H\Z$--algebras with homotopy ring $\Lambda_{\F_p} (x_n)$ and such $H\Z$--algebras are Postnikov extensions of $H\F_p$ of type $(H\F_p,n)$ in $H\Z$--algebras. 

At this point we note the piece of explanation that is missing in \cite{dugger2007topological} about this classification. In \cite{dugger2007topological} it is claimed that weak equivalence classes of $H\Z$--algebras whose homotopy ring are $\Lambda_{\F_p} (x_n)$ are classified by $\pi_0 \mathcal{M}_{H\Z}(H\F_p+H\F_p,n)$. However in $\mathcal{M}_{H\Z}(H\F_p+H\F_p,n)$, the morphisms are weak equivalences of Postnikov extensions, ie for two Postnikov extensions of $H\F_p$ of type $(H\F_p,n)$: $X_1 \to H\F_p$ and $X_2\to{H\F_p}$, a morphism in $\mathcal{M}_{H\Z}(H\F_p+H\F_p,n)$ is a weak equivalence of $H\Z$--algebras$\begin{tikzcd}[sep=small] X_1 \arrow[r,"\sim"]&X_2\end{tikzcd}$for which the triangle above commutes. In the classification we are concerned with here, a morphism of two Postnikov extensions is just a weak equivalence of $H\Z$--algebras$\begin{tikzcd}[sep=small] X_1 \arrow[r,"\sim"]&X_2.\end{tikzcd}$ In general one should not expect these two classifications to be the same. However, in this case we will show that they are actually the same. Unfortunately we cannot use  simple point set arguments to prove this, even if we work with $\Z$--DGAs instead of $H\Z$--algebras, because one needs to use a cofibrant and fibrant $H\Z$--algebra model of $H\F_p$ and also because we need the same result over $\Sp$--algebras not only for $H\Z$--algebras. 

We prove that these two classifications are the same by first showing that there is a unique homotopy class of $H\Z$--algebra maps from $X$ to $H\F_p$. We prove this in Lemma \ref{lemma unique map} by using the obstruction theory of the Hopkins--Miller theorem. Using this fact, we prove that $\pi_0 \mathcal{M}_{H\Z}(H\F_p+H\F_p,n)$ actually classifies weak equivalence classes of $H\Z$--algebras with homotopy ring $\Lambda_{\F_p} (x_n)$.  

\begin{proposition}
\label{classification 1}
The set $\pi_0 \mathcal{M}_{H\Z}(H\F_p+H\F_p,n)$ is in bijective correspondence with weak equivalence classes of $H\Z$--algebras with homotopy ring $\Lambda_{\F_p} (x_n)$ for $n>0$. This statement holds for $\Sp$--algebras too. Namely, the set $\pi_0\mathcal{M}_{\Sp}(H\F_p+H\F_p,n)$ is in bijective correspondence with weak equivalence classes of $\Sp$--algebras with homotopy ring $\Lambda_{\F_p} (x_n)$ for $n>0$.
\end{proposition}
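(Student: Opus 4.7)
The plan is to construct a bijection $\Phi\colon \pi_0 \mathcal{M}_{H\Z}(H\F_p + H\F_p, n) \to \{\text{weak eq.\ classes of } H\Z\text{--algebras with }\pi_* \cong \Lambda_{\F_p}(x_n)\}$ by forgetting the augmentation $X \to H\F_p$. Surjectivity is automatic: for such an $X$ one has $\pi_0 X = \F_p$ and $\pi_i X = 0$ for $0 < i < n$, so the zeroth Postnikov section $X \to P_0 X \simeq H\F_p$ exhibits $X$ as a Postnikov extension of type $(H\F_p, n)$.

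For injectivity, suppose $(X_1, f_1)$ and $(X_2, f_2)$ are Postnikov extensions and $\psi\colon X_1 \xrightarrow{\sim} X_2$ is a weak equivalence of underlying $H\Z$--algebras (not required to commute with the augmentations). After cofibrant--fibrant replacement, the derived mapping space of Postnikov extensions appears as the homotopy fiber
\begin{equation*}
\mathrm{map}_{H\Z \mhyphen \mathrm{alg}/H\F_p}\bigl((X_1, f_1), (X_2, f_2)\bigr) \to \mathrm{map}_{H\Z \mhyphen \mathrm{alg}}(X_1, X_2) \xrightarrow{f_2 \circ (-)} \mathrm{map}_{H\Z \mhyphen \mathrm{alg}}(X_1, H\F_p)
\end{equation*}
over $f_1$. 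By the forthcoming Lemma \ref{lemma unique map}, the target is path-connected, so $f_2 \circ \psi$ and $f_1$ lie in the same component; lifting a path between them through the fibration yields $\psi' \simeq \psi$ (hence still a weak equivalence) which strictly commutes with the augmentations. This $\psi'$ is a morphism in $\mathcal{M}_{H\Z}(H\F_p + H\F_p, n)$, placing $(X_1, f_1)$ and $(X_2, f_2)$ in the same component of the moduli space.

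The $\Sp$--algebra statement follows by the identical formal argument once one has the corresponding uniqueness statement for $\mathrm{map}_{\Sp \mhyphen \mathrm{alg}}(X, H\F_p)$, which is established by running the same Hopkins--Miller obstruction theory over $\Sp$--algebras instead of $H\Z$--algebras. The main obstacle is Lemma \ref{lemma unique map} itself: one must verify the vanishing of the relevant Andr\'e--Quillen cohomology obstruction groups in the Hopkins--Miller spectral sequence, exploiting the sparse shape of $\pi_* X$ and the coefficient bimodule $\pi_* H\F_p = \F_p$. Once that input is in hand, the fiber-sequence argument above is essentially formal and delivers the bijection in both the $H\Z$--algebra and $\Sp$--algebra cases.
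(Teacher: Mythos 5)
Your proposal is correct and follows essentially the same route as the paper: surjectivity via the zeroth Postnikov section, and injectivity by reducing everything to the uniqueness of the homotopy class of the augmentation $X_1 \to H\F_p$ (Lemma \ref{lemma unique map}). The only cosmetic difference is in the last step: you rectify $\psi$ to a strictly augmentation-preserving $\psi'$ by path-lifting in the fibration $\mathrm{map}(X_1,X_2)\to \mathrm{map}(X_1,H\F_p)$, whereas the paper records the homotopy between $\varphi_1$ and $\varphi_2\circ\psi$ as a zig-zag of Postnikov extensions through a cylinder object; both are standard and equivalent.
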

\begin{proof}
We only prove the statement for $H\Z$--algebras. The proof is similar for $\Sp$--algebras, the only important difference is that one uses the second part of Lemma \ref{lemma unique map} instead of the first part.

Since up to weak equivalence there is a unique $H\Z$--algebra with homotopy $\F_p$ concentrated at degree zero, every $H\Z$--algebra with homotopy ring $\Lambda_{\F_p} (x_n)$ is a Postnikov extension of $H\F_p$ of type $(H\F_p,n)$. Given two such Postnikov extensions:  $\varphi_1 \co X_1 \to H\F_p$ and $\varphi_2\co X_2 \to H\F_p$, if these extensions are weakly equivalent in $\mathcal{M}_{H\Z}(H\F_p+H\F_p,n)$ then they are clearly weakly equivalent as $H\Z$--algebras. We need to show that when these Postnikov extensions are weakly equivalent as $H\Z$--algebras, they are also weakly equivalent in $\mathcal{M}_{H\Z}(H\F_p+H\F_p,n)$. 


Let $X_1$ and $X_2$ be weakly equivalent as $H\Z$--algebras, we show that $\varphi_1$ and $\varphi_2$ are weakly equivalent in $\mathcal{M}_{H\Z}(H\F_p+H\F_p,n)$. We assume that $X_1$ and $X_2$ are both fibrant and cofibrant and $H\F_p$ is fibrant as $H\Z$--algebras. 

Because $X_1$ and $X_2$ are weakly equivalent as $H\Z$--algebras and because $X_1$ is cofibrant and $X_2$ is fibrant, there is a weak equivalence of $H\Z$--algebras$\begin{tikzcd}[sep=small]\psi \co X_1 \arrow[r,"\sim"]&X_2.\end{tikzcd}$ Using this weak equivalence we define another Postnikov extension of $H\F_p$ of type $(H\F_p,n)$ which is the composite $\varphi_2 \circ \psi \co X_1 \to H\F_p$. This Postnikov extension, $\varphi_2 \circ \psi$, is weakly equivalent to $\varphi_2$ in $\mathcal{M}_{H\Z}(H\F_p+H\F_p,n)$ through $\psi$. Therefore it is sufficient to show that $\varphi_2 \circ \psi$ is weakly equivalent to $\varphi_1$ in $\mathcal{M}_{H\Z}(H\F_p+H\F_p,n)$. By Lemma \ref{lemma unique map}, there is a unique homotopy class of maps from $X_1$ to $H\F_p$. Therefore, $\varphi_1$ and $\varphi_2 \circ \psi$ are homotopic. We have the following diagram in $H\Z$--algebras which corresponds to a homotopy between these maps where $X_1 \wedge I$ is a path object of $X_1$. 

\begin{equation*}
 \begin{tikzcd}
 X_1 \arrow [dr,"\simeq"]
 \arrow[ddr,swap,"\varphi_1"]
 &
 & X_1
 \arrow[dl,swap,"\simeq"]
 \arrow[ddl,"\varphi_2 \circ \psi"]
 \\
 &
 X_1 \wedge I
 \arrow[d]
 \\
 &H\F_p
 \end{tikzcd}
 \end{equation*}

The map $X_1 \wedge I \to H\F_p$ in the above diagram is also a Postnikov extension of type $(H\F_p,n)$ of $H\F_p$ because it factors $\varphi_1$ by a weak equivalence. Therefore the above diagram gives a zig-zag of weak equivalences between $\varphi_1$ and $\varphi_2 \circ \psi$ in $\mathcal{M}_{H\Z}(H\F_p+H\F_p,n)$. This shows that $\varphi_1$ and $\varphi_2$ are weakly equivalent in $\mathcal{M}_{H\Z}(H\F_p+H\F_p,n)$.

\end{proof}

At this point, we are ready to provide the classification of weak equivalence classes of $H\Z$--algebras with homotopy ring $\Lambda_{\F_p} (x_n)$ for $n>0$ and hence, quasi-isomorphism classes of $\Z$--DGAs with homology ring $\Lambda_{\F_p} (x_n)$ for $n>0$. By Proposition \ref{classification 1} and Theorem \ref{DS 1}, This is given by $\mathrm{THH}_{H\Z}^{n+2} (H\F_p,H\F_p) / \mathrm{Aut}(\F_p)$. As described in Example 3.15 of \cite{dugger2007topological}, $\mathrm{THH}_{H\Z}^{*} (H\F_p,H\F_p) \cong \F_p[\sigma_2]$, a polynomial algebra with a generator in degree 2 (with cohomological grading). Calculating the quotient of $\F_p[\sigma_2]$ by the multiplicative action of $\F_p$, one obtains the following classification: for odd $n>0$, there is a unique $H\Z$--algebra with homotopy ring $\Lambda_{\F_p} (x_n)$ and for even $n>0$, there are exactly two non-weakly equivalent $H\Z$--algebras whose homotopy ring is $\Lambda_{\F_p} (x_n)$. For $n=0$, this classification says that there are two Postnikov extensions of $H\F_p$ of type $(H\F_p,0)$ and these are $H \Z / p^2$ and $H \Lambda_{\F_p} (x_0)$.

Similarly, weak equivalence classes of $\Sp$--algebras with homotopy ring $\Lambda_{\F_p} (x_n)$ are given by $\mathrm{THH}_{\Sp}^{n+2} (H\F_p,H\F_p) / \mathrm{Aut}(\F_p)$. In this case,  
 $\mathrm{THH}_{\Sp}^{*} (H\F_p,H\F_p) \cong \Gamma[\alpha_2]$ as rings where $\Gamma[\alpha_2]$ is the divided polynomial algebra on a generator of degree 2 which is isomorphic to $\F_p[\sigma_2]$ as an  $\F_p$--module, see Theorem 13.4.15 in \cite{loday1988cyclic}. Therefore we get a similar classification result: there are exactly two non weakly equivalent $\Sp$--algebras with homotopy ring $\Lambda_{\F_p} (x_n)$ for odd $n>0$ and there is only one for even $n>0$. 

What we are really interested in here is deciding which of these non-weakly equivalent $H\Z$--algebras are weakly equivalent as $\Sp$--algebras. For this, one considers the the map
\begin{equation*}
\mathrm{THH}_{H\Z}^{n+2} (H\F_p,H\F_p) \to \mathrm{THH}_{\Sp}^{n+2} (H\F_p,H\F_p)
\end{equation*}
induced by the forgetful functor from $H\Z$--algebras to $\Sp$--algebras. This corresponds to a morphism of rings $\varphi \co \F_p [\sigma_2] \to \Gamma[\alpha_2]$. Where $\varphi$ maps $\sigma_2$ to $\alpha_2$ because $H \Z / p^2$ and $H \Lambda_{\F_p} (x_0)$ are non-weakly equivalent as $\Sp$--algebras. Since $\alpha_2^p = 0$ in $\Gamma[\alpha_2]$, $\varphi(\sigma_2^p)=0$. This implies that the two non-weakly equivalent $H\Z$--algebras corresponding to $\sigma_2^p$ and $0$ are weakly equivalent as $\Sp$--algebras. These are the first example of non-trivial topological equivalences from \cite{dugger2007topological}. That is, there are two non-weakly equivalent $\Z$--DGAs with homology ring $\Lambda_{\F_p} (x_{2p-2})$ which are topologically equivalent. 

To complete the proof of Proposition \ref{classification 1}, we need to prove the following lemma. Let $X$ denote an $H\Z$--algebra or an $\Sp$--algebra with homotopy ring $\pi_* (X) \cong \Lambda_{\F_p} (x_n)$ for $\lvert x_n \rvert >0$. 


\begin{lemma} \label{lemma unique map}
Let $X$ be an $H\Z$--algebra as above, we have 
\[\pi_{0} \mathrm{map}_{H\Z \mhyphen alg}(X,H\F_p) = {pt}.\]
For an $\Sp$--algebra $X$ as above, we have
\[\pi_{0} \mathrm{map}_{\Sp \mhyphen alg}(X,H\F_p) = {pt}.\]

\end{lemma}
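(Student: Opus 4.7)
The plan is to apply the Hopkins--Miller style obstruction spectral sequence for associative ring spectra (the $T$--algebra spectral sequence of \cite{johnson2014lifting} in the associative setting) to compute $\pi_0 \mathrm{map}_{R \mhyphen alg}(X, H\F_p)$ for both $R = H\Z$ and $R = \Sp$. The spectral sequence abuts to $\pi_{t-s}\mathrm{map}_{R \mhyphen alg}(X,H\F_p)$, with $E_2^{0,0}$ equal to the set of graded ring maps $\pi_*(X) \to \pi_*(H\F_p)$, and higher $E_2$ terms governed by derived derivations of $\pi_*(X)$ into bimodules of the form $\pi_*(H\F_p^{S^t})$ for $t \geq 1$. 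The base point will be the Postnikov structure map $X \to P_0 X \simeq H\F_p$, which supplies both the surviving element of $E_2^{0,0}$ and realizes the setup of the spectral sequence.

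First I would identify $E_2^{0,0}$. Since $\pi_*(X) = \Lambda_{\F_p}(x_n)$ with $\lvert x_n \rvert = n > 0$ and $\pi_*(H\F_p) = \F_p$ is concentrated in degree $0$, any graded ring homomorphism must send $x_n$ to $0$, leaving only the augmentation $\Lambda_{\F_p}(x_n) \to \F_p$. This unique homomorphism is realized by the given Postnikov map and hence survives all differentials.

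Next I would show $E_2^{s,t} = 0$ for $t > 0$ by a degree argument. Under the equivalence $H\F_p^{S^t} \simeq \Omega^t H\F_p$, the coefficient bimodule $\pi_*(H\F_p^{S^t})$ is $\F_p$ concentrated in degree $-t$, i.e.\ in strictly negative degree. Since $\pi_*(X)$ is concentrated in non-negative degrees, any degree-preserving derivation from $\Lambda_{\F_p}(x_n)$ into such a bimodule is determined by its value on the generator $x_n$ in degree $n \geq 1$, and this value must vanish for lack of a target. The same observation applied to each level of a simplicial free resolution of $\pi_*(X)$ as an $\F_p$--algebra shows that the higher derived derivations vanish as well. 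Consequently the spectral sequence collapses to a single point at $E_2^{0,0}$, giving $\pi_0 \mathrm{map}_{R \mhyphen alg}(X, H\F_p) = \{*\}$.

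The main obstacle is simply bookkeeping: confirming that the associative version of the obstruction spectral sequence is available in both the $H\Z$--algebra and $\Sp$--algebra settings with the indicated $E_2$ description (citing \cite{rezk1998notes} and \cite{johnson2014lifting} as appropriate), and that the degree-based vanishing carries through the appropriate resolution used to define the derived derivations. The argument is identical in the two cases because the coefficient module $\pi_*(H\F_p^{S^t})$ depends only on the target $H\F_p$, and the degree obstruction is insensitive to the choice of base ring $R$.
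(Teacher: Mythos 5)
There is a genuine gap in your identification of the $E_2$ page. The Bousfield/Hopkins--Miller spectral sequence computing $\pi_*\mathrm{map}_{R \mhyphen alg}(X,H\F_p)$ has $E_1^{s,t}=\pi_t\mathrm{map}_{R\mhyphen alg}(T^{s+1}X,H\F_p)\cong\pi_t\mathrm{map}_{R\mhyphen mod}(T^{s}X,H\F_p)$, and to rewrite this algebraically one needs a homology theory $E$ for which $E_*$ of a free $R$--algebra is the free algebra on $E_*$ of the underlying module. That fails for $E=\pi_*$: $\pi_*(M\wedge_R M)$ is not $\pi_*M\otimes_{\pi_*R}\pi_*M$ (here $\pi_*X=\Lambda_{\F_p}(x_n)$ is not flat over $\Z$, let alone over $\pi_*\Sp$), and correspondingly $\pi_0\mathrm{map}_{R\mhyphen mod}(M,H\F_p)$ is not $\mathrm{Hom}_{\pi_*R}(\pi_*M,\F_p)$ --- there are higher $\mathrm{Ext}$ contributions from the universal coefficient spectral sequence. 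So the spectral sequence you describe, with $E_2^{0,0}$ the set of graded ring maps $\pi_*X\to\pi_*H\F_p$ and higher terms given by derived derivations of $\pi_*X$, is not available in either the $H\Z$--algebra or the $\Sp$--algebra setting.

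The correct algebraic input, and the one the paper uses, is the relative $H\F_p$--homology $\pi_*(H\F_p\wedge_R X)$: since the target is $H\F_p$, one has $\mathrm{map}_{R\mhyphen mod}(M,H\F_p)\simeq\mathrm{map}_{H\F_p\mhyphen mod}(H\F_p\wedge_R M,H\F_p)$, and $H\F_p\wedge_R(-)$ does carry free $R$--algebras to free algebras. Thus $E_2^{0,0}=\mathrm{Hom}_{\F_p\mhyphen alg}(\pi_*(H\F_p\wedge_R X),\F_p)$ and the obstruction groups are the Andr\'e--Quillen cohomology groups $\mathrm{Der}^t(\pi_*(H\F_p\wedge_R X),\Omega^{t-1}\F_p)$, resp.\ with coefficients $\Omega^{t}\F_p$. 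Your degree argument does survive this correction, but it must be applied to $\pi_*(H\F_p\wedge_R X)$ rather than to $\pi_*X$: a K\"unneth computation shows this algebra is connective with $\pi_0=\F_p$ (it is strictly larger than $\Lambda_{\F_p}(x_n)$, since $\Lambda_{\F_p}(x_n)$ is not flat over $\pi_* R$), so the unique grading- and unit-preserving algebra map to $\F_p$ is the augmentation, and all derivations into the negatively graded modules $\Omega^t\F_p$ vanish, levelwise along the free simplicial resolution. Note also that this algebraic input genuinely depends on $R$, contrary to your closing remark, although the connectivity argument is insensitive to that difference.
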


\begin{proof}
Hopkins--Miller obstruction theory states that obstructions to lifting a morphism in $E_2^{0,0} = \mathrm{Hom}_{\F_p \mhyphen alg} (\pi_* H\F_p \wedge_{H\Z} X, \F_p)$ to a map of $H\Z$--algebras lie in  the Andr{\'e}--Quillen cohomology for associative algebras, $E_2^{t,t-1}= \mathrm{Der}^t(\pi_* H\F_p \wedge_{H\Z} X, \Omega^{t-1}\F_p)$ for $t \geq 2$ and obstructions to homotopy unqiueness of the lift lie in $E_2^{t,t}= \mathrm{Der}^t(\pi_* H\F_p \wedge_{H\Z} X, \Omega^{t}\F_p)$ for $t \geq 1$. The desuspension functor $\Omega^s$ is defined by $\Omega^s M_* = M_{*+s}$ for a graded module $M_*$. By the Künneth spectral sequence, $\pi_0 H\F_p \wedge_{H\Z} X= \F_p$. Therefore there is only a single map in $\mathrm{Hom}_{\F_p \mhyphen alg} (\pi_* H\F_p \wedge_{H\Z} X, \F_p)$ because these morphisms preserve the identity and the grading. To show that the obstructions to existence and uniqueness are zero, first, note that by the Künneth spectral sequence, $\pi_* H\F_p \wedge_{H\Z} X$ is connected and $\Omega^t \F_p$ is in negative degrees for $t>0$. Therefore, $\mathrm{Der}(F^{\bullet +1} (\pi_* H\F_p \wedge_{H\Z} X), \Omega^t \F_p) = 0$ for $t>0$ where $F$ denotes the free associative algebra functor. Therefore, the cohomology of this co-simplicial abelian group is also zero. This proves the desired result. 
The argument for homotopy class of maps in $\Sp$--algebras is similar, the only difference is that one uses $\pi_* H\F_p \wedge_{\Sp} X$ instead of $\pi_* H\F_p \wedge_{H\Z} X$.

\end{proof}

{\footnotesize
}

\begin{thebibliography}{99999999}

\bibitem[Bak12]{baker2012calculating}
Andrew Baker.
\newblock Calculating with topological {A}ndr{\'e}--{Q}uillen theory, {I}:
  Homotopical properties of universal derivations and free commutative {$ \Sp
  $}--algebras.
\newblock {\em arXiv preprint arXiv:1208.1868}, 2012.

\bibitem[Bak15]{baker2013power}
Andrew Baker.
\newblock Power operations and coactions in highly commutative homology
  theories.
\newblock {\em Publications of the Research Institute for Mathematical
  Sciences}, 51(2):237--272, 2015.

\bibitem[Bas99]{basterra1999andre}
M.~Basterra.
\newblock Andr\'e--{Q}uillen cohomology of commutative {$S$}--algebras.
\newblock {\em J. Pure Appl. Algebra}, 144(2):111--143, 1999.

\bibitem[Bou89]{bousfield1989homotopy}
AK~Bousfield.
\newblock Homotopy spectral sequences and obstructions.
\newblock {\em Israel Journal of Mathematics}, 66(1-3):54--104, 1989.

\bibitem[BMMS86]{brunerh}
R.~R. Bruner, J.~P. May, J.~E. McClure, and M.~Steinberger.
\newblock {\em {$H_\infty $} ring spectra and their applications}, volume 1176
  of {\em Lecture Notes in Mathematics}.
\newblock Springer--Verlag, Berlin, 1986.



\bibitem[DS06]{dugger2006postnikov}
Daniel Dugger and Brooke Shipley.
\newblock Postnikov extensions of ring spectra.
\newblock {\em Algebr. Geom. Topol.}, 6:1785--1829, 2006.

\bibitem[DS07]{dugger2007topological}
Daniel Dugger and Brooke Shipley.
\newblock Topological equivalences for differential graded algebras.
\newblock {\em Advances in Mathematics}, 212(1):37--61, 2007.

\bibitem[EKMM97]{elmendorf2007rings}
A.~D. Elmendorf, I.~Kriz, M.~A. Mandell, and J.~P. May.
\newblock {\em Rings, modules, and algebras in stable homotopy theory},
  volume~47 of {\em Mathematical Surveys and Monographs}.
\newblock American Mathematical Society, Providence, RI, 1997.
\newblock With an appendix by M. Cole.

\bibitem[GH04]{goerss2004moduli}
P.~G. Goerss and M.~J. Hopkins.
\newblock Moduli spaces of commutative ring spectra.
\newblock In {\em Structured ring spectra}, volume 315 of {\em London Math.
  Soc. Lecture Note Ser.}, pages 151--200. Cambridge Univ. Press, Cambridge,
  2004.

\bibitem[JN14]{johnson2014lifting}
Niles Johnson and Justin Noel.
\newblock Lifting homotopy {T--algebra} maps to strict maps.
\newblock {\em Advances in Mathematics}, 264:593--645, 2014.

\bibitem[Law15]{lawson2015note}
Tyler Lawson.
\newblock A note on {$H_\infty$} structures.
\newblock {\em Proceedings of the American Mathematical Society},
  143(7):3177--3181, 2015.

\bibitem[Lod98]{loday1988cyclic}
Jean--Louis Loday. 
\newblock {\em Cyclic homology}, volume 301 of {\em Grundlehren der
  Mathematischen Wissenschaften [Fundamental Principles of Mathematical
  Sciences]}.
\newblock Springer--Verlag, Berlin, second edition, 1998.
\newblock Appendix E by Mar\'\i a O. Ronco, Chapter 13 by the author in
  collaboration with Teimuraz Pirashvili.

\bibitem[Man06]{mandell2006cochains}
Michael~A. Mandell.
\newblock Cochains and homotopy type.
\newblock {\em Publ. Math. Inst. Hautes \'Etudes Sci.}, (103):213--246, 2006.


\bibitem[MMSS01]{Mandell01Model}
M.~A. Mandell, J.~P. May, S.~Schwede, and B.~Shipley.
\newblock Model categories of diagram spectra.
\newblock {\em Proc. London Math. Soc. (3)}, 82(2):441--512, 2001.

\bibitem[May71]{may1971homology}
J~Peter May.
\newblock Homology operations on infinite loop spaces.
\newblock In {\em Algebraic topology}, volume \rom{10}\rom{10}\rom{1}\rom{1} of
  {\em Proc. Sympos. Pure Math}, pages 171--185. Amer. Math. Soc., 1971.

\bibitem[Mil58]{milnor1958steenrod}
John Milnor.
\newblock The {S}teenrod algebra and its dual.
\newblock {\em Annals of Mathematics}, 67(1):150--171, 1958.


\bibitem[Noe14]{noel2009h}
Justin Noel.
\newblock {$H_\infty\neq E_\infty$}.
\newblock In {\em An alpine expedition through algebraic topology}, volume 617
  of {\em Contemp. Math.}, pages 237--240. Amer. Math. Soc., Providence, RI,
  2014.

\bibitem[Noe15]{noel2015t}
Justin Noel.
\newblock The {T--algebra} spectral sequence: Comparisons and applications.
\newblock {\em Algebr. Geom. Topol.}, 14(6):3395--3417, 2015.

\bibitem[Qui70]{quillen1970co}
Daniel Quillen.
\newblock On the (co-) homology of commutative rings.
\newblock In {\em Applications of {C}ategorical {A}lgebra ({P}roc. {S}ympos.
  {P}ure {M}ath., {V}ol. {XVII}, {N}ew {Y}ork, 1968)}, pages 65--87. Amer.
  Math. Soc., Providence, R.I., 1970.

\bibitem[Rez98]{rezk1998notes}
Charles Rezk.
\newblock Notes on the {H}opkins--{M}iller theorem.
\newblock In {\em Homotopy theory via algebraic geometry and group
  representations ({E}vanston, {IL}, 1997)}, volume 220 of {\em Contemp.
  Math.}, pages 313--366. Amer. Math. Soc., Providence, RI, 1998.

\bibitem[RS17]{richter2014algebraic}
Birgit Richter and Brooke Shipley.
\newblock An algebraic model for commutative {$H\mathbb{Z}$}--algebras.
\newblock {\em Algebr. Geom. Topol.}, 17(4):2013--2038, 2017.


\bibitem[Rob89]{robinson1989obstruction}
Alan Robinson.
\newblock Obstruction theory and the strict associativity of {M}orava
  {$K$}--theories.
\newblock In {\em Advances in homotopy theory ({C}ortona, 1988)}, volume 139 of
  {\em London Math. Soc. Lecture Note Ser.}, pages 143--152. Cambridge Univ.
  Press, Cambridge, 1989.

\bibitem[Rob03]{robinson2003gamma}
Alan Robinson.
\newblock Gamma homology, {L}ie representations and {$E_\infty$}
  multiplications.
\newblock {\em Invent. Math.}, 152(2):331--348, 2003.


\bibitem[Shi07]{shipley2007hz}
Brooke Shipley.
\newblock {$H\mathbb{Z}$}--algebra spectra are differential graded algebras.
\newblock {\em American journal of mathematics}, 129(2):351--379, 2007.



\end{thebibliography}
\end{document}